\documentclass[12pt,a4paper,reqno]{amsart}

\usepackage[T1]{fontenc}
\usepackage[utf8]{inputenc}
\usepackage{amsthm, textcomp, amsmath, amsfonts,  amssymb }
\usepackage{graphicx}
\usepackage{enumerate}
\usepackage{url}
\usepackage{comment}
\usepackage[colorlinks]{hyperref}
\usepackage[a4paper,margin=2.2cm]{geometry}
\allowdisplaybreaks[4]
\usepackage{ulem}
\usepackage{xcolor}

\newtheorem{theorem}{Theorem}
\newtheorem*{theorem*}{Theorem}

\newtheorem{definition}[theorem]{Definition}

\newtheorem{lemma}[theorem]{Lemma}

\newtheorem{proposition}[theorem]{Proposition}
\theoremstyle{definition}
\newtheorem{remark}[theorem]{Remark}

\usepackage{orcidlink}
\title{The pearl ensemble and the logarithmic energy on the Sphere}

\author[B. Gariboldi]{B. Gariboldi\textsuperscript{1} \orcidlink{0000-0001-8714-4135}}
\address{\textsuperscript{1}Facoltà di Ingegneria e Informatica, Dipartimento di Scienze e tecnologie dell’Informazione, Universit{\`a} Unipegaso, Italy}
\email{biancamaria.gariboldi@unipegaso.it}
\author[G. Gigante]{G. Gigante\textsuperscript{2} \orcidlink{0000-0002-1642-679X}}
\address{\textsuperscript{2}Dipartimento di Ingegneria Gestionale, dell'Informazione e
della Produzione, Universit{\`a} degli Studi di Bergamo, Viale G. Marconi 5, 24044,
Dalmine (BG), Italy}
\email{giacomo.gigante@unibg.it}

\thanks{MSC 2020: 31C12, 31C29, 52C35}
\thanks{}
\keywords{Minimal logarithmic energy, Constructive spherical points}

\allowdisplaybreaks
\definecolor{zzttqq}{rgb}{0.6,0.2,0.}
\begin{document}

\begin{abstract}
We define a family of random sets of points on the sphere $\mathbb S^2$, the Pearl ensemble, depending on several parameters. The expected value of the logarithmic energy can be computed exactly up to terms of order $O(\sqrt{N}\log N)$, with $N$ the number of points. For properly chosen values of the parameters, the coefficient of the linear term in the expansion of the logarithmic energy can be taken as close as desired to the value $(1-\log 3)/2\sim -0.0493061\ldots$. Among the explicit constructions currently known to us for which the logarithmic energy expansion has been rigorously determined up to the linear term, the Pearl ensemble yields the smallest coefficient.
\end{abstract}
\maketitle

\section{Introduction}

Our starting point is Smale's 7th problem. Its original formulation is as follows. 

\bigskip

\begin{quotation}
\textbf{Problem 7: Distribution of Points on the 2-Sphere} (\cite{Smale}).
Let 
\[V_N(x)=\sum_{1\leq i<j\leq N} \log\frac{1}{\|x_i-x_j\|},\]%\footnote{In [Smale] the logarithmic energy $V_N$ is defined as $\sum_{1\leq i<j\leq N} \log\frac{1}{\|x_i-x_j\|}$. Here we use this different convention, which is used in the more recent literature on the topic.}
where $x=(x_1,\ldots,x_N)$, the $x_i$ are distinct points on the 2-sphere $\mathbb S^2\subset \mathbb R^3$, and $\|x_i-x_j\|$ is the distance in $\mathbb R^3$. Denote $\min_x V_N(x)$ by $V_N$. 

\textit{Find $(x_1,\ldots,x_N)$ such that
\begin{equation}\label{smale}
V_N(x)-V_N\leq c\log N, \quad c\text{ a universal constant}.\end{equation}}
To ``find'' means to give an algorithm which on input $N$ outputs distinct $x_1,\ldots,x_N$ on the 2-sphere satisfying \eqref{smale}. 
[$\ldots$]
The function $V_N$ as a function of $N$ satisfies 
\[V_N=-\frac14\log\left(\frac4e \right)N^2-\frac14 N\log N+O(N).\]
[$\ldots$]
\end{quotation}
\bigskip

In this paper, following the convention used in the more recent literature, we consider the logarithmic energy
\[E_{N}(x)=2V_N(x)=\sum_{i\neq j} \log\frac{1}{\|x_i-x_j\|}.\]
We denote $\min_{x} E_N(x)$ by $\mathcal E_N$

In \cite{BS} the authors prove that there exists a constant $C\neq 0$ such that if $N\to+\infty$
\[\mathcal E_N=\left(\frac12-\log 2 \right)N^2-\frac12 N\log N+ C N+o(N),\]
where 
\[C\leq C_{BHS}:=2\log 2+\frac12\log \frac23 +3\log \frac{\sqrt{\pi}}{\Gamma(1/3)}=-0.0556053\ldots,\]
and it was conjectured originally by Brauchart, Hardin and Saff in \cite{BHS} that $C=C_{BHS}$.
Recently, J. Marzo in \cite{JM} proved that 
\[C\geq \log 2-\frac34+\frac1{162}\left(\sqrt[4]{3}\sqrt{2\pi}\left(2+3\tanh^{-1}\frac12 \right)-12 \right)^2=-0.0568456\ldots.\]

We are therefore far from the solution to Smale's 7th problem since it is not even clear what the constant $C$ exactly is. Nevertheless, several attempts at finding more or less explicit point configurations with low logarithmic energy have been made by several authors. We recall here three types of configurations coming from different point processes. The spherical ensemble (see \cite{AZ}) gives an average logarithmic energy
\[\mathbb{E}(E_N)=\left(\frac12-\log2 \right)N^2-\frac12N\log N+\left(\log 2-\frac\gamma2 \right)N-\frac14+O\left(\frac1N \right),\]
where $\gamma$ is the Euler constant. Notice that
\[\log 2-\frac\gamma2=0.404539\ldots.\]
On the other hand, in \cite{ABS} it is shown that points obtained as projection of zeros of certain random polynomials give an average logarithmic energy
\[\mathbb{E}(E_N)=\left(\frac12-\log2 \right)N^2-\frac12N\log N-\left(\frac12-\log2 \right)N+o(N),\]
where
\[-\left(\frac12-\log2 \right)=0.193147\ldots.\]
Finally, the best result so far comes from the Diamond ensemble in the papers of Beltr\'an, Etayo and L\'opez-G\'omez (see \cite{BE}, \cite{Pedro}, \cite{PedroTesi}) which gives an average logarithmic energy
\begin{equation}\label{BE}\mathbb{E}(E_N)=\left(\frac12-\log2 \right)N^2-\frac12N\log N+c_{\diamond}N+o(N),
\end{equation}
with
\[c_{\diamond}=-0.049222\ldots.\]
Let us take a closer look at the construction in \cite{BE}. The point distribution is composed by the North and South poles and a collection of points placed on different parallels. If one wants to keep the points in such a collection well separated, one can proceed as follows: divide the sphere into $2n+1$ parallels separated from each other by an angle $\Delta\varphi=\pi/(2n+2)$ and place on the $j$-th parallel $v_j$ points spaced by a multiple of the Euclidean distance between two consecutive parallels,  $2\sin(\Delta\varphi/2)$. Since the length of the $j$-th parallel is $2\pi\sin(j\Delta\varphi)$, this would give
\[v_j=\frac{K_0\pi\sin(j\Delta\varphi)}{\sin(\Delta\varphi/2)}.\]
The randomness in this construction comes from a random rotation of each parallel. 
The problem of this construction is of course that $v_j$ in general is not an integer. A heuristic argument that ignores this issue shows that the choice $K_0=3/\pi$ would give an average logarithmic energy
\begin{equation}\label{best}
\mathbb{E}(E_N)=\left(\frac12-\log2 \right)N^2-\frac12N\log N+\frac{1-\log 3}{2}N+o(N),
\end{equation}
where
\[\frac{1-\log 3}{2}=-0.049306\ldots.\] 

\begin{figure}
\label{fig}
\includegraphics[width=1\textwidth]{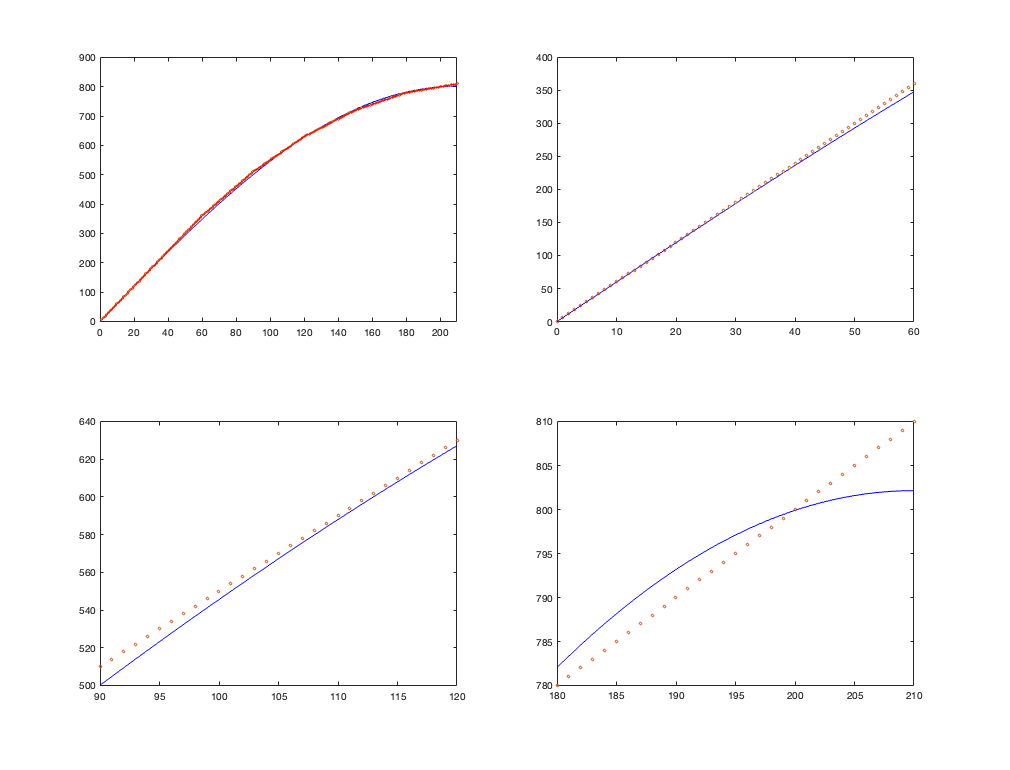}
\caption{The functions $j\to v_j$ (blue) and $j\to r(j)$ (red), with $n=209$.}
\end{figure}

The authors in \cite{BE} replace the function $j\to v_j$ with piecewise linear approximations $j\to r(j)$ taking integer values and such that
\[r(j)=r(2n+2-j)=\begin{cases} \alpha_1+\beta_1 j & \text{if } 0=t_0\leq j\leq t_1\\ \vdots & \vdots\\  \alpha_k+\beta_k j & \text{if } t_{k-1}\leq j\leq t_k=n+1 \end{cases}\]
with $0=t_0<t_1<\ldots<t_{k-1}<t_k=n+1$ and all the $t_\ell, \alpha_\ell, \beta_\ell$ are assumed to be integer numbers. This function has been chosen in such a way that when $n=7m-1$ ($m$ positive integer) and $N=239m^2+2$, then the average logarithmic energy is precisely \eqref{BE}. More precisely, the authors set $k=6$, and for $1\le \ell\le 6$, $t_\ell=(1+\ell)m$, $\alpha_\ell=(\ell^2+\ell-2)m/2$ and $\beta_\ell=7-\ell$. See Figure \ref{fig} for the case $m=30$. Later, in \cite{Pedro} (see also \cite{PedroTesi}) a more clever choice of the parameters $t_\ell, \alpha_\ell, \beta_\ell$ gives \eqref{BE} for every number of points $N$. %We believe they named this point process Diamond ensemble due to the piecewise linear approximation $r(j)$. 
Despite the roughness of the piecewise linear approximation $r(j)$, the constant $c_\diamond$ is a very good approximation of the heuristic constant $(1-\log 3)/2$. 

The authors of \cite{BE} themselves notice that their construction is reminiscent of the one introduced by  Rakhmanov, Saff and Zhou in \cite{RSZ}. In \cite{RSZ} the authors  obtain an equal area partition of the sphere consisting of equal sectors in the two spherical caps centered in the North and in the South poles and rectangular cells obtained first dividing the sphere with essentially equispaced parallels and then dividing each collar between two consecutive parallels with meridians. Indeed, the numerical simulations in \cite[Figure 22]{HMS} on the logarithmic energy obtained considering the centers of each cell give an empirical value of
\[\lim_{N\to+\infty}\dfrac{\mathbb{E}(E_N)-\left(\frac12-\log2 \right)N^2+\frac12 N\log N}N\]
close to $c_\diamond$.

The partition strategy in \cite{RSZ} is the following: for any partition $\{m_1,\ldots,m_{2n+1}\}$ of the integer $N$ the authors first divide the sphere into $2n+1$ collars and then divide the $j$-th collar into $m_j$ equal rectangular cells. Assume the $j$-th collar is bounded by latitudes $\psi_{j-1}$ and $\psi_{j}$, with $\psi_0=0$ and $\psi_{2n+1}=\pi$. Since the area of each cell has to be $4\pi/N$, in order to achieve small diameters the desired length of the sides of each cell is $2\sqrt{\pi/N}$. Hence the angle $\psi_j-\psi_{j-1}$ of each collar should be roughly $2\sqrt{\pi/N}$ and therefore $2n+1\sim \sqrt{\pi N}/2$. On the other hand, since the area of the $j$-th collar is $2\pi(\cos\psi_{j-1}-\cos\psi_j)$, each collar should be divided into 
\[m_j \sim y_j:=\dfrac{2\pi(\cos\psi_{j-1}-\cos\psi_j)}{4\pi/N}=\frac N2(\cos\psi_{j-1}-\cos\psi_j).\]  
As for the Diamond ensemble, also in this case the authors face the problem that $y_j$ in general is not an integer. Rather than using a piecewise linear approximation, in this case $y_j$ is replaced by one of the two closest integers by means of the following lemma.

\begin{lemma}\cite[Lemma 2.5]{RSZ} \label{lemmanondopato}
For every $n$ let $\{y_i\}_{i=1}^{2n+1}$ be a sequence of real numbers such that
\[y_j=y_{2n+2-j}, \qquad j=1, \ldots, n\] and such that \[\sum_{i=1}^{2n+1} y_i=T\in\mathbb N.\] 
Then there exists a symmetric sequence of integers $\{m_i\}_{i=1}^{2n+1}$ such that
\begin{itemize}
\item[$i)$] $\sum\limits_{i=1}^{2n+1} m_i=\sum  \limits_{i=1}^{2n+1} y_i=T$;
\item[$ii)$] $|y_i-m_i|\leq 1, \ i=1, \ldots, 2n+1;$
\item[$iii)$] $\left|\sum\limits_{i=1}^j (y_i-m_i) \right|\leq \dfrac12, \ j=1, \ldots, 2n+1$.
\end{itemize}
\end{lemma}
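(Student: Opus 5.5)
The natural construction is to round the partial sums. Set $S_j=\sum_{i=1}^j y_i$ for $0\le j\le 2n+1$, so $S_0=0$ and $S_{2n+1}=T$. Define integers $M_j$ by rounding $S_j$ to a nearest integer, and put $m_j=M_j-M_{j-1}$.

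Everything then hinges on choosing the rounding so that three requirements hold at once:
\begin{itemize}
\item[(a)] $|S_j-M_j|\le 1/2$ for every $j$;
\item[(b)] $M_0=0$ and $M_{2n+1}=T$;
\item[(c)] $M_{2n+1-j}=T-M_j$ for $0\le j\le 2n+1$.
\end{itemize}
The properties of the statement follow from these. Telescoping with (b) gives $\sum_i m_i=M_{2n+1}-M_0=T$, which is $i)$. Since $\sum_{i\le j}(y_i-m_i)=S_j-M_j$, condition (a) is exactly $iii)$. For $ii)$, write $y_j-m_j=(S_j-M_j)-(S_{j-1}-M_{j-1})$; by (a) this is at most $1/2+1/2=1$ in absolute value.

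Symmetry comes from (c). From $y_j=y_{2n+2-j}$ one gets $S_{2n+1-j}=T-S_j$. Indeed, the last $j$ terms $y_{2n+2-j},\dots,y_{2n+1}$ are $y_j,\dots,y_1$, so they sum to $S_j$, and removing them from $T$ leaves $S_{2n+1-j}$. Then $m_{2n+2-j}=M_{2n+2-j}-M_{2n+1-j}$, and applying (c) to both terms gives $(T-M_{j-1})-(T-M_j)=m_j$.

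The only real work is making (a)–(c) compatible, and the issue arises only at ties. Choose $M_j$ to be a nearest integer to $S_j$ for $0\le j\le n$; then $M_0=0$. For $n+1\le j\le 2n+1$, define $M_j:=T-M_{2n+1-j}$. This is again a nearest integer to $S_j=T-S_{2n+1-j}$, since $T$ is an integer and the reflection $x\mapsto T-x$ maps nearest integers to nearest integers. It also gives $M_{2n+1}=T-M_0=T$.

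The definition needs no consistency check. The index sets $\{0,\dots,n\}$ and $\{n+1,\dots,2n+1\}$ are interchanged by $j\mapsto 2n+1-j$, and there is no fixed point because $2n+1$ is odd. Hence (c) holds by construction, and (a) holds because every $M_j$ is a nearest integer.

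The main obstacle one might fear is an inconsistency in the middle of the sequence, for instance the middle term $y_{n+1}$ being forced to a half-integer. It is resolved by the pairing above: $m_{n+1}=M_{n+1}-M_n=T-2M_n$. No separate constraint on the middle term appears, because $y_{n+1}=T-2S_n$, so $|y_{n+1}-m_{n+1}|=2|S_n-M_n|\le 1$, consistent with $ii)$.
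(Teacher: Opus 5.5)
Your proof is correct: rounding the partial sums $S_j$ for $0\le j\le n$, extending by $M_{2n+1-j}=T-M_j$, and differencing gives i)--iii) and the symmetry exactly as you check, and the middle term $m_{n+1}=T-2M_n$ satisfies $|y_{n+1}-m_{n+1}|=2|S_n-M_n|\le 1$. The paper only quotes this lemma from \cite{RSZ} without proof, but your construction is the first-order version of its proof of Lemma~\ref{lemmadopato}: there it rounds the double partial sums on the first half, defines $m_j$ by differencing, reflects, and fixes the middle term by $m_{n+1}=T-2\sum_{j\le n}m_j$, which is the same as your $T-2M_n$.
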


%seems to deteriorate when the collars approach the poles. This is due 

%Our modest contribution is to search for a sharper approximation of the function $j\to y_j$ in order to obtain an average logarithmic energy as close as possible to \eqref{best}. The idea is very simple and consists in using a slight improvement of the following lemma by Saff, Rachmanov and Zhou in \cite{RSZ}.

In other words, point ii) in this lemma guarantees that the error committed when replacing $y_j$ with $m_j$ is essentially the smallest possible. 

The second issue with this partition strategy is that the shape of the cells remains essentially rectangular only far from the poles. In particular the criterium for the choice of $y_j$ (and therefore of $m_j$), which is based on the fact that we seek for essentially square cells, deteriorates near the poles. 

The core idea of the present paper is to enhance the techniques of \cite{RSZ} so that taking the point distribution given by the centers of the cells we can get an average logarithmic energy as close as possible to \eqref{best}. In particular, we will use the method of \cite{RSZ} only for the region corresponding to latitudes between a fixed value $\Phi_0$ and $\pi-\Phi_0$ (we will call this part of the sphere tropical region). On the other hand, we will consider the polar regions, 
those with latitude smaller than $\Phi_0$ or larger than $\pi-\Phi_0$, as flat. Simple geometric observations show that one obtains equal area cells with very similar dimensions when the number of cells in each annulus in a flat configuration grows linearly. In this case, the diameter of each region is smallest when the number of cells in the $j$-th annulus is $6j$. Notice that in the Diamond ensemble the collars closest to the poles are divided precisely into $6j$ rectangular cells (that is $\beta_1=6$). In our case, we will divide the $j$-th polar collar into $8j$ rectangular cells. This still gives a rather small diameter of each polar cell and furthermore has the convenient property that the total number of cells in the first $j$ collars, including the cell around the North Pole, is 
\[1+8+16+\ldots+8j=(1+2j)^2.\]

In a forthcoming paper, we will show the geometric properties of this equal area partition, which in particular improves the best known results on the bounds of the diameter of the cells and of the diameter of the largest spherical cap contained in each cell (see \cite{GG}).

We will call Pearl ensemble the point process obtained from the uniformly drawn random rotation of each parallel in the above construction (see Definitions \ref{omega} and \ref{perla}). The origin of this name stems from the many analogies with the Diamond ensemble, but at the same time with a somehow slightly higher degree of smoothness in the approximation of $v_j$ vs. $y_j$ with integer values.

It should be mentioned here that, by \cite[Proposition 2.5]{BE}, for any given number of points in each parallel, the average logarithmic energy is minimized for certain specific computable values of the longitude. For this reason, rather than choosing precisely the centers of the cells, the nodes of the Pearl ensemble will be taken at the longitude given by \cite[Proposition 2.5]{BE}.

Our main result here is the following.

\begin{theorem}\label{main}
For every $\varepsilon>0$, there exists a choice of rational numbers $A_0=(\alpha/\sigma)^2$ and $B_0=(\beta/\tau)^2$, with $\alpha, \beta, \sigma, \tau$ integers, such that the corresponding Pearl ensemble with $N=\gamma(2h+1)^2$ points ($\gamma=\alpha^2\beta^2$ and $h$ is the integer parameter going to $+\infty$) has logarithmic energy with expected value %is the product  of the numerators of $A_0$ and $B_0$,
\[
\mathbb E( E_{N})=
\left(\frac12-\log2 \right)N^2-\frac12 N\log N+C(A_0,B_0)N+O(\sqrt{N}\log N),
\]
with
\[\frac{1-\log 3}{2}< C(A_0,B_0)<\frac{1-\log 3}{2}+\varepsilon.\]
\end{theorem}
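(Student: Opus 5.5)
We will start from the exact formula of \cite[Proposition 2.5]{BE}. When the $N$ points lie on parallels at heights $z_j=\cos\theta_j$, with $r_j$ points on the $j$-th parallel, each parallel independently rotated uniformly at random and the two poles included, the expected energy is an explicit sum. It consists of three kinds of terms. The first is a double sum over pairs of distinct parallels $j\neq k$ of $r_jr_k$ times the averaged logarithmic kernel $-\log\bigl((1+\max(z_j,z_k))(1-\min(z_j,z_k))\bigr)/2$ (up to constants). The second is a single sum over parallels of the self-interaction $-r_j\log r_j - r_j\log(1-z_j^2)/2$ (up to constants). The third consists of the pole terms.

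The plan is to substitute for the heights $z_j$ and the multiplicities $r_j$ those of the Pearl construction. In the polar caps, the $j$-th collar carries $8j$ points and the cumulative counts are $(2j+1)^2$. In the tropical region, the $r_j$ are the integers $m_j$ supplied by Lemma \ref{lemmanondopato} from the real values $y_j$. The parameters $A_0,B_0$ fix the opening angle $\Phi_0$ of the polar caps and the relative number of collars. Their rationality, with $N=\gamma(2h+1)^2$, is what makes the polar cap counts and the collar counts integral.

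Next we will compare the double sum with the continuous energy of the equal-area measure. We will write the double sum as a Riemann--Stieltjes integral against the counting function $F(z)=\#\{\text{points with height}\le z\}$ and set $F=F_{\mathrm{cont}}+R$. Here $F_{\mathrm{cont}}(z)=N(1+z)/2$ is the uniform profile. The $R$-part will be controlled by property iii) of Lemma \ref{lemmanondopato}, which says the cumulative discrepancy is at most $1/2$ in the tropics. In the caps the discrepancy is exactly computable, since the cumulative counts $(2j+1)^2$ match the areas of the caps. After integration by parts, the cross term then contributes only $O(\sqrt N\log N)$. The main term gives $(\frac12-\log 2)N^2$, and the diagonal correction, where the double sum omits $j=k$, feeds the linear term.

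Then we will evaluate the single sums by Euler--Maclaurin. In the tropical region, $r_j\approx y_j$ is proportional to $\sqrt N\sin\theta_j$ times a constant depending on $A_0,B_0$. The sum of $r_j\log r_j$ together with the $\log\sin$ terms therefore produces $-\frac12N\log N$ plus an explicit linear coefficient. In the caps, $r_j=8j$ with the corresponding $\theta_j$ gives a second explicit contribution. Adding everything yields $C(A_0,B_0)$ in closed form: an integral over the tropical latitudes of an expression in the local ratio $K$ of along-parallel spacing to between-parallel spacing, plus an explicit cap constant depending on $\Phi_0$. The replacements $y_j\to m_j$ cost $|y_j-m_j|\le1$ on $O(\sqrt N)$ parallels, each inside a $\log$, hence $O(\sqrt N\log N)$ in total.

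Finally, we will show that the heuristic of the introduction is recovered. The per-point linear contribution is a function $g(K)=$ const$-\log K+\ldots$, minimized at $K_0=3/\pi$ with value $(1-\log 3)/2$. Choosing $A_0,B_0$ so that $K\equiv 3/\pi$ in the tropics and letting $\Phi_0\to0$ then makes $C(A_0,B_0)\to(1-\log 3)/2$. Since $\mathbb Q$ is dense and $C$ depends continuously on $A_0,B_0$, admissible rational squares exist that achieve any $\varepsilon$. The strict lower bound holds because the caps use $8j$ points rather than the optimal ratio, so their contribution is strictly worse whenever $\Phi_0>0$.

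The main obstacle is the bookkeeping at the interface between the caps and the tropics, and the near-diagonal terms $|j-k|$ small in the double sum. Both are needed to get the linear constant exactly rather than up to $O(N)$. For these we will first try to use exact summation formulas, such as $\sum\log$ of products via Gamma functions as in \cite{BE}, rather than crude integral comparison.
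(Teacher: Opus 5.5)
Your outline has the right shape: start from the Beltr\'an--Etayo formula, replace $m_j$ by $y_j$, apply Euler--Maclaurin, optimize, and use density of rational squares. Your choice $K\equiv 3/\pi$ in the tropics does correspond to the paper's $B_{\min}(A)$. But the step that carries the theorem is assumed, not proved.

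Everything rests on having the linear coefficient in closed form. You replace that by an ansatz:
\begin{itemize}
\item $C$ is a tropical integral of a universal per-point function $g(K)$ of the local aspect ratio, plus a cap constant;
\item $g$ is minimized at $K_0=3/\pi$ with minimum $(1-\log3)/2$;
\item the cap term vanishes as $\Phi_0\to0$ and is ``strictly worse'' otherwise.
\end{itemize}
The value $(1-\log 3)/2$ at $K_0=3/\pi$ is exactly the heuristic of the introduction, obtained by ignoring integrality, so it cannot be quoted. Nothing in your sketch shows that the linear term is local in this sense, identifies $g$, or controls the cap--tropics interface. Hence neither the limit nor the strict inequality $C(A_0,B_0)>(1-\log3)/2$ is established.

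In the paper both come from the explicit $C(A,B)$ of Theorem \ref{main2}. For fixed $A$ the minimum over $B$ is at $B_{\min}(A)=12/(\pi-2\Phi_0)^2$. Then $C(A,B_{\min}(A))$ is an explicit function, decreasing in $A>2$ with limit $(1-\log3)/2$. So $C(A,B)\ge C(A,B_{\min}(A))>(1-\log3)/2$ for all $A>2$, $B>0$. Density of $\{(a/b)^2:\ a,b \text{ odd}\}$ then gives $A_0,B_0$. Note that oddness, not mere rationality, is what makes $2n_P+1=(2h+1)\beta\sigma$ and $2n_T+1=(2h+1)\alpha\tau$ admissible.

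On deriving $C(A,B)$ itself, the obstacle you single out is not the real one. The rotation-averaged kernel $-\frac12\log(1+\max(z_j,z_k))-\frac12\log(1-\min(z_j,z_k))$ is separable. So the sum over $j\neq k$ collapses exactly into single sums weighted by cumulative counts. At the optimal heights this is \eqref{exenergy} ([BE, Thm.~2.6]), and there are no near-diagonal terms to handle. Note also that the self-energy of a parallel is $-r_j\log r_j-\frac{r_j(r_j-1)}{2}\log(1-z_j^2)$, not what you wrote.

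The delicate step is replacing $m_j,s_j$ by $y_j,s_j'$ in $-(N-1)\sum_j m_j(1-z_j^T)\log(1-z_j^T)$. There the rounding errors $e_j=m_j-y_j$, and the induced $O(1/N)$ shifts of the heights, multiply coefficients of size $N$ (e.g.\ $\sum_j e_js_j'L_j$). They do not merely sit inside a logarithm. One summation by parts against a bounded first cumulative sum gives only $O(N)$, which would spoil the linear coefficient. That is why the Pearl ensemble is built from Lemma \ref{lemmadopato}, whose condition iv) permits a second summation by parts (Proposition \ref{passo5}), and not from Lemma \ref{lemmanondopato} as in your proposal.

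Your splitting $F=F_{\mathrm{cont}}+R$ does not settle this as written. The cross term vanishes identically, because the uniform measure has constant potential. The $e_j$-dependence, and part of the linear coefficient, live in the $R$--$R$ interaction, which you never estimate. There $R$ is a sawtooth of amplitude of order $\sqrt N$ inside each collar, not a quantity bounded by $\frac12$.
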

Among the explicit constructions currently known to us for which the logarithmic energy expansion has been rigorously determined up to the linear term, the Pearl ensemble yields the smallest coefficient of the linear term.

\section{The Pearl ensemble}

For our purposes, Lemma \ref{lemmanondopato} is not sufficient. We need to show the following refined version.

\begin{lemma}\label{lemmadopato}
For every $n$ let $\{y_i\}_{i=1}^{2n+1}$ be a sequence of real numbers such that
\[y_j=y_{2n+2-j}, \qquad j=1, \ldots, n\] and such that \[\sum_{i=1}^{2n+1} y_i=T\in\mathbb N.\] 
Then there exists a symmetric sequence of integers $\{m_i\}_{i=1}^{2n+1}$ such that
\begin{itemize}
\item[$i)$] $\sum\limits_{i=1}^{2n+1} m_i=\sum  \limits_{i=1}^{2n+1} y_i=T$;
\item[$ii)$] $|y_i-m_i|\leq 2, \ i=1, \ldots, 2n+1;$
\item[$iii)$] $\left|\sum\limits_{i=1}^j (y_i-m_i) \right|\leq 1, \ j=1, \ldots, 2n+1$;
\item[$iv)$] $\left|\sum\limits_{k=1}^j \sum\limits_{i=1}^k (y_i-m_i) \right|\leq \dfrac12, \ j=1, \ldots, 2n+1$.
\end{itemize}
\end{lemma}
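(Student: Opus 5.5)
The plan is to construct the $m_i$ by rounding at the level of the \emph{double} partial sums. Properties $i)$–$iii)$ will then follow automatically by taking first and second differences. Set $S_0=0$, $S_j=\sum_{i=1}^j y_i$, $D_{-1}=D_0=0$ and $D_j=\sum_{k=1}^j S_k$. We look for integers $M_{-1}=M_0=0,M_1,\dots,M_{2n+1}$ and then put
\[
m_j=M_j-2M_{j-1}+M_{j-2},\qquad j=1,\dots,2n+1 .
\]
Then $\sum_{i\le j}m_i=M_j-M_{j-1}$ and $\sum_{k\le j}\sum_{i\le k}m_i=M_j$. So if $|D_j-M_j|\le \frac12$ for all $j$, we get $iv)$ directly. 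Taking one difference gives $iii)$ with bound $1$, and taking two differences gives $ii)$ with bound $2$; this is exactly why the constants in the statement are $\frac12,1,2$.

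The key computation uses the symmetry $y_j=y_{2n+2-j}$, which gives $S_j+S_{2n+1-j}=T$ for $0\le j\le 2n+1$. Pairing $k$ with $2n+1-k$ in $\sum_{k=0}^{2n+1}S_k$ yields $D_{2n+1}=(n+1)T\in\mathbb N$. More generally,
\[
D_{2n+1}-D_{2n-j}=\sum_{i=0}^{j}(T-S_i)=(j+1)T-D_j,
\]
that is,
\[
D_{2n-j}=D_j+c_j,\qquad c_j:=(n+1)T-(j+1)T=(n-j)T\in\mathbb Z,\qquad -1\le j\le 2n+1 .
\]
This identity is consistent at the fixed point $j=n$, where $c_n=0$.

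The construction is as follows. For $-1\le j\le n$, let $M_j$ be a nearest integer to $D_j$; ties are broken arbitrarily, and $M_{-1}=M_0=0$ are exact. For $n<j\le 2n+1$, define $M_j:=M_{2n-j}+c_{2n-j}$. Since $c_{2n-j}$ is an integer, $|D_j-M_j|=|D_{2n-j}-M_{2n-j}|\le\frac12$ on the whole range, which settles $iv)$, $iii)$ and $ii)$.

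For $i)$, note that $M_{2n+1}=M_{-1}+(n+1)T=(n+1)T$ and $M_{2n}=M_0+nT=nT$. Hence $\sum_{i=1}^{2n+1}m_i=M_{2n+1}-M_{2n}=T$.

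For the symmetry of $m$, I would take second differences of the relation $M_{2n-j}=M_j+(n-j)T$, which holds for all $-1\le j\le 2n+1$. Since $(n-j)T$ is affine in $j$, it is killed by the second difference, and a short index check gives $m_{2n+2-j}=m_j$.

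The main point to verify carefully is this last index bookkeeping: that the second difference of $M$ at $2n+2-j$ corresponds to the second difference at $j$, including at the endpoints, where $M_{-1}$ and $M_{2n+1}$ enter. The construction itself is a straightforward one-step upgrade of the rounding argument behind Lemma \ref{lemmanondopato}.
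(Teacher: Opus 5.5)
Your proof is correct and is essentially the paper's construction: you round the double partial sums $D_j$ for $j\le n$ and take second differences. Reflecting $M$ through the identity $D_{2n-j}=D_j+(n-j)T$ gives exactly the paper's sequence, namely $m_{n+1}=T-2(M_n-M_{n-1})=T-2\sum_{j\le n}m_j$ and $m_{2n+2-j}=m_j$. Your exact reflection identity also makes explicit the ``by symmetry'' step that the paper uses for the indices $j>n+1$.
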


\begin{proof}
For every $j=1,\ldots n$ define
\[T_j=\sum\limits_{i=1}^j \sum\limits_{\ell=1}^i y_\ell.\]
Notice that
\[y_j=(T_j-T_{j-1})-(T_{j-1}-T_{j-2}).\]
Now, for every $j=1,\ldots n$, let $P_j$ the nearest integer to $T_j$, so that
\[|P_j-T_j|\leq \frac12,\]
and set $P_0=P_{-1}=0$. 
Define for every $j=1,\ldots n$
\[m_j=m_{2n+2-j}=(P_j-P_{j-1})-(P_{j-1}-P_{j-2})=P_j-2P_{j-1}+P_{j-2}\]
and
\begin{equation}\label{m}
m_{n+1}=T-2\sum_{j=1}^{n}m_j.
\end{equation}
Then, for every $j=1,\ldots n$,
\begin{align*}
\left|\sum_{i=1}^j\sum_{\ell=1}^{i}(y_\ell-m_\ell)\right|&=\left|\sum_{i=1}^j\sum_{\ell=1}^{i}y_\ell- \sum_{i=1}^j\sum_{\ell=1}^{i}m_\ell\right|=\left|T_j- \sum_{i=1}^j(P_i-P_{i-1})\right|=|T_j-P_j|\leq \frac12.
\end{align*}
Furthermore, for every $j=1,\ldots n$,
\begin{align*}
\left|\sum_{i=1}^j (y_i-m_i) \right|=\left|(T_j-T_{j-1})-(P_j-P_{j-1}) \right|\leq \frac12+\frac12=1
\end{align*}
and
\begin{align*}
|y_j-m_j|=|(P_j-P_{j-1})-(P_{j-1}-P_{j-2})-(T_j-T_{j-1})+(T_{j-1}-T_{j-2})|\leq 2.
\end{align*}
Now, by \eqref{m},
\[y_{n+1}-m_{n+1}=-2\sum_{j=1}^n (y_j-m_j).\]
Hence,
\[\sum_{j=1}^{n+1} (y_j-m_j)=\sum_{j=1}^n (y_j-m_j)+(m_{n+1}-y_{n+1})=-\sum_{j=1}^n (y_j-m_j)\]
and
\[\sum_{j=1}^{n+1}\sum_{i=1}^{j} (y_i-m_i)=\sum_{j=1}^{n-1}\sum_{i=1}^{j} (y_i-m_i)+\sum_{i=1}^{n} (y_i-m_i)+\sum_{i=1}^{n+1} (y_i-m_i)=\sum_{j=1}^{n-1}\sum_{i=1}^{j} (y_i-m_i).\]
Therefore
\[\left|\sum_{j=1}^{n+1}\sum_{i=1}^{j} (y_i-m_i)\right|\leq \frac12, \qquad \left|\sum_{j=1}^{n+1} (y_j-m_j)\right|\leq 1, \qquad |y_{n+1}-m_{n+1}|\leq 2.\]
By symmetry, we have the result for every $j> n+1$.
\end{proof}

Let $A, B\in\mathbb Q$, with $A>2$ and $B>0$. Let $N\in\mathbb N$ be the total number of points on the sphere. Let us assume that there exist $n_P$ and $n_T$ in $\mathbb N$ such that 
\begin{equation}\label{N}
N=A(2n_P+1)^2=B(2n_T+1)^2.
\end{equation}
Set 
\begin{equation}\label{Phizero}\Phi_0:=\arccos\left(1-\frac{2}A\right).\end{equation}
We shall call the regions of colatitude smaller than $\Phi_0$ or larger than $\pi-\Phi_0$ polar regions. We will consider $n_P$ parallels on each polar region and $2n_T+1$ parallels on the remaining tropical region. Concerning the polar regions, we will take $8j$ equally spaced points on the $j$-th parallel, $1\leq j\leq n_P$. As we mentioned in the introduction, this corresponds to a zonal partition of, say, the North polar region into rectangular cells, precisely $8j$ in the collar between latitudes $\varphi_{j-1}=\arccos(1-2(2j-1)^2/N)$ and $\varphi_{j}=\arccos(1-2(2j+1)^2/N)$, $1\leq j\leq n_P$. Concerning the tropical regions, let
\begin{equation}\label{Delta}
\Delta \psi=\frac{\pi-2\Phi_0}{2n_T+1}, \qquad \psi_j=\Phi_0+j\Delta \psi \ \text{ with } 0\leq j\leq 2n_T+{1} 
\end{equation}
and for $1\leq j\leq 2n_T+1$
\begin{equation}\label{y_j}y_j=\frac{N}{2}(\cos\psi_{j-1}-\cos\psi_j)=N\sin \theta_j\sin\left(\frac12\Delta\psi \right).
\end{equation}
with
\begin{equation}\label{theta_j}
\theta_j=\frac{\psi_{j}+\psi_{j-1}}{2}=\Phi_0+\left(j-\frac12\right)\Delta\psi.\end{equation}
Furthermore, for $1\leq j\leq 2n_T+1$,
\begin{equation}\label{s_j'}
s_j':=\sum_{i=1}^{j} y_i=\frac N2(\cos\psi_0-\cos\psi_j)=\frac N2-\frac NA-\frac N2\cos\psi_j.
\end{equation}
In particular,
\[s_{2n_T+1}'=\sum_{i=1}^{2n_T+1} y_i=N\left(1-\frac 2A\right)=N-2(2n_P+1)^2.\]

Let $\{m_j\}_{j=1}^{2n_T+1}$ be obtained as in Lemma \ref{lemmadopato} starting from the $\{y_j\}_{j=1}^{2n_T+1}$ just defined. Let 
\[1>z_{1}^P>z_{2}^P>\ldots>z_{n_P}^P>z_1^T>z_2^T>\ldots>z_{2n_T+1}^T>-z_{n_P}^P>\ldots>-z_1^P>-1.\]
Let us finally consider the generic sequences of angles $\{\vartheta_j^{NP}\}_{j=1}^{n_P}$,  $\{\vartheta_j^{SP}\}_{j=1}^{n_P}$,  and $\{\vartheta_j^T\}_{j=1}^{2n_T+1}$.

\begin{definition}\label{omega}
Let \[\Omega=\Omega(A,n_P, \{z_j^P\}_{j=1}^{n_P}, \{\vartheta_j^{NP}\}_{j=1}^{n_P}, \{\vartheta_j^{SP}\}_{j=1}^{n_P}, B,n_T,\{m_j\}_{j=1}^{2n_T+1},\{z_j^T\}_{j=1}^{2n_T+1}, \{\vartheta_j^T\}_{j=1}^{2n_T+1})\] be the following sets of $N$ points:
\begin{itemize}
\item[i.] the North and the South Poles, 
\item[ii.] for every $j=1,\ldots,n_P$ we consider $8j$ points equally spaced on a parallel at height $z_j^P$ starting from longitude $\vartheta_j^{NP}$ and $8j$ on the opposite parallel at height $-z_j^P$ starting from longitude $\vartheta_j^{SP}$, 
\item[iii.] for every $j=1,\ldots, 2n_T+1$ we consider $m_j$ points equally spaced on a parallel at height $z_j^T$ starting from longitude $\vartheta_j^{T}$.
\end{itemize}
\end{definition}

Observe that the total number of points in $\Omega$ is indeed $N$: two poles, $\sum_{j=1}^{n_P} 8j=4n_P(n_P+1)$ points in each polar region and  $\sum_{j=1}^{2n_T+1} m_j=\sum_{j=1}^{2n_T+1} y_j=N\left(1-\frac2A\right)$ points in the tropical region, that is
\[2(2n_P+1)^2+N\left(1-\frac2A\right)=2\frac NA+N\left(1-\frac2A\right)=N.\]
In particular the number of points in each polar region is $(2n_P+1)^2$, including the poles themselves. We will call this number $K$, so that
\[\sum_{j=1}^{2n_T+1} m_j=\sum_{j=1}^{2n_T+1} y_j=N-2K.\]

By \cite[Proposition 2.5]{BE} it is known that the  values of $z_j^P$ and $z_j^T$ that minimize the expected value for the logarithmic energy drawn from randomly and uniformly chosen rotations of each parallel are
\[z_j^P=1-\frac{1-8j+2\sum_{i=1}^{j} 8i}{N-1}=1-\dfrac{1+8j^2}{N-1} \quad \text{ for } j=1,\ldots,n_P,\]
\[z_j^T=1-\frac{1-m_j+2\left(\sum_{i=1}^{n_p}8i+\sum_{i=1}^jm_i\right)}{N-1}=1-\frac{2K-1-m_j+2s_j}{N-1} \quad \text{ for } j=1,\ldots,2n_T+1,\]
with $s_j=\sum_{i=1}^jm_i$. Notice that the following lemma holds.

\begin{lemma}
Each polar height $z_j^P$ lies in the corresponding interval:
\[\cos\varphi_j\leq z_j^P\leq \cos \varphi_{j-1}, \qquad 1\leq j\leq n_P.\]
Similarly, for $N$ sufficiently large, each tropical height $z_j^T$ lies in the corresponding interval:
\[\cos\psi_j\leq z_j^T\leq \cos \psi_{j-1}, \qquad 1\leq j\leq 2n_T+1.\]
\end{lemma}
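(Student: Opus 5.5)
The plan is to reduce both chains of inequalities to elementary comparisons of the explicit formulas. The polar case is an exact inequality valid for every $N$. The tropical case follows from the cumulative error bound iii) of Lemma \ref{lemmadopato}, together with a lower bound of order $\sqrt N$ for every $y_j$.

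\emph{Polar heights.} Since $\cos\varphi_j=1-2(2j+1)^2/N$, $\cos\varphi_{j-1}=1-2(2j-1)^2/N$ and $z_j^P=1-(1+8j^2)/(N-1)$, the claim is equivalent to
\[
\frac{2(2j-1)^2}{N}\le \frac{1+8j^2}{N-1}\le \frac{2(2j+1)^2}{N}.
\]
After clearing denominators, the right inequality becomes
\[
(8j+1)N-2(2j+1)^2\ge 0 .
\]
This holds because $j\le n_P$ gives $N=A(2n_P+1)^2> 2(2n_P+1)^2\ge 2(2j+1)^2$, using $A>2$. The left inequality becomes
\[
(8j-1)N+2(2j-1)^2\ge 0,
\]
which is trivial. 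This settles the polar case with no largeness assumption.

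\emph{Tropical heights, rewriting.} Since $K=N/A$, formula \eqref{s_j'} gives
\[
\cos\psi_j=1-\frac{2K+2s_j'}{N}, \qquad s_0'=0.
\]
Using $s_j=s_{j-1}+m_j$ we get $2K-1-m_j+2s_j=2K+s_j+s_{j-1}-1$. Hence the claim is equivalent to
\[
2K+2s'_{j-1}\;\le\; \frac{N}{N-1}\bigl(2K+s_j+s_{j-1}-1\bigr)\;\le\; 2K+2s'_j .
\]

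\emph{Tropical heights, estimating the middle term.} Write $\frac{N}{N-1}=1+\frac1{N-1}$. Since $0\le 2K+s_j+s_{j-1}\le 2N$, the extra factor contributes at most $3$, so the middle term equals $2K+s_j+s_{j-1}+O(1)$. By Lemma \ref{lemmadopato} iii), $|s_j-s_j'|\le 1$ for all $j$, and $s_0=s_0'=0$. Therefore the middle term is
\[
2K+s'_j+s'_{j-1}+E_j, \qquad |E_j|\le C_0,
\]
with $C_0$ an absolute constant (about $6$). Comparing with the two endpoints, whose gaps from $2K+s'_j+s'_{j-1}$ are exactly $s'_j-s'_{j-1}=y_j$, it suffices to show $y_j\ge C_0$ for all $j$ once $N$ is large.

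\emph{The main point: a uniform lower bound on $y_j$.} By \eqref{y_j}, $y_j=N\sin\theta_j\sin(\Delta\psi/2)$ with $\theta_j\in[\Phi_0,\pi-\Phi_0]$, so $\sin\theta_j\ge\sin\Phi_0>0$, since $A>2$ is fixed. Moreover $\Delta\psi=(\pi-2\Phi_0)/(2n_T+1)$ and $2n_T+1=\sqrt{N/B}$ by \eqref{N}. Hence $y_j\ge c(A,B)\sqrt N$, which exceeds $C_0$ for $N$ large.

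The only delicate point is tracking the $O(1)$ errors, coming from the factor $N/(N-1)$ and from replacing $s_j$ by $s_j'$, uniformly in $j$; both are controlled by iii) and by the trivial bound $s_j\le N$.
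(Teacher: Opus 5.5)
Your proof is correct and follows essentially the paper's route. In the polar case you clear denominators directly, and your crude bound $(8j+1)N\ge N>2(2n_P+1)^2\ge 2(2j+1)^2$ replaces the paper's monotonicity argument for $2(2j+1)^2/(8j+1)$. In the tropical case you compare with the idealized heights via Lemma~\ref{lemmadopato}~iii), which reduces everything to $y_j\ge C_0$, guaranteed by $y_j\gtrsim\sqrt N$. The paper packages this comparison through $(z_j^T)'=\frac{N}{2(N-1)}(\cos\psi_{j-1}+\cos\psi_j)$ and obtains the slightly sharper threshold $y_j\ge 3$, but the difference is only one of constants.
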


\begin{proof}
Let us begin with the polar regions. It suffices to prove that 
\[1-\frac{2(2j+1)^2}N\leq1-\frac{1+8j^2}{N-1} \leq 1-\frac{2(2j-1)^2}N,\]
that is
\[\frac{1-8j}{2(2j+1)^2}\leq \frac{1}{N} \leq \frac{8j+1}{2(2j+1)^2}.\]
The first inequality is immediate, the second follows from the fact the the function $2(2j+1)^2/(8j+1)$ is increasing in $j$ and 
\[N\geq \frac{2(2n_P+1)^2}{8n_P+1}=\frac{2N}{A(8n_P+1)}\iff 8n_P+1\geq \frac{2}{A},\]
and this follows for each $n_p$ since $A>2$. 

Let us now consider the tropical regions and define
\begin{align*}
(z_j^T)'&:=1-\frac{2K-1-y_j+2s_j'}{N-1}\\
&=1-\frac{2\dfrac NA-1-\dfrac N2(\cos\psi_{j-1}-\cos\psi_j)+N-2\dfrac NA-N\cos\psi_j}{N-1}\\
&=\frac{N}{2(N-1)}(\cos\psi_{j-1}+\cos\psi_j),
\end{align*}
by \eqref{y_j} and \eqref{s_j'}.
Hence, by Lemma \ref{lemmadopato},
\begin{align*}
|z_j^T-(z_j^T)'|=\left|\frac{m_j-2s_j}{N-1}-\frac{y_j-2s_j'}{N-1} \right|=\left|\frac{-s_j-s_{j-1}+s_j'+s_{j-1}'}{N-1} \right|\leq \frac2{N-1},
\end{align*}
and therefore
\[z_j^T=\frac{N}{2(N-1)}(\cos\psi_{j-1}+\cos\psi_j)-\frac{(s_{j-1}-s'_{j-1})+(s_j-s'_j)}{N-1}.\]
Now, by \eqref{y_j},
\begin{align*}
z_j^T-\cos\psi_j&=\frac{N}{2(N-1)}(\cos\psi_{j-1}+\cos\psi_j)-\frac{(s_{j-1}-s'_{j-1})+(s_j-s'_j)}{N-1}-\cos\psi_j\\
&=\frac1{N-1}\left(\frac{N}{2}(\cos\psi_{j-1}+\cos\psi_j)-(N-1)\cos\psi_j-\left(s_{j-1}-s'_{j-1})-(s_j-s'_j)\right) \right)\\
&=\frac1{N-1}(y_j +\cos\psi_j-(s_{j-1}-s'_{j-1})-(s_j-s'_j))
\end{align*}
and similarly
\begin{align*}
\cos\psi_{j-1}-z_j^T=\frac1{N-1}(y_j -\cos\psi_{j-1}+(s_{j-1}-s'_{j-1})+(s_j-s'_{j})).
\end{align*}
Therefore, since $|(s_{j-1}-s'_{j-1})+(s_{j}-s'_{j})|\leq 2$, one has $\cos\psi_j\leq z_j^T\leq \cos\psi_{j-1}$ if and only if
\[y_j\geq \max\{2-\cos\psi_{j},\cos\psi_{j-1}+2\}.\]
The thesis follows noticing that 
\[y_j\geq N\sin\left(\Phi_0-\frac{\Delta\psi}{2}\right) \sin\left(\frac{\Delta\psi}{2}\right)\geq 3\]
for $N$ large enough since 
\[\Delta\psi=\frac{\sqrt{B}(\pi-2\Phi_0)}{\sqrt{N}}.\]
\end{proof}

We can now define the Pearl ensemble.

\begin{definition}\label{perla}
Let $A, B\in\mathbb Q$, with $A>2$ and $B>0$. Let $n_P, n_T \in \mathbb N$ be such that 
\[A(2n_P+1)^2=B(2n_T+1)^2.\]
Let $\{y_j\}$ and $\{m_j\}$ be defined as above. We define the \textbf{Pearl ensemble} as the family of random sets of points obtained from 
\[
\Omega(A,n_P, \{z_j^P\}_{j=1}^{n_P}, \{\vartheta_j^{NP}\}_{j=1}^{n_P}, \{\vartheta_j^{SP}\}_{j=1}^{n_P}, B,n_T,\{m_j\}_{j=1}^{2n_T+1},\{z_j^T\}_{j=1}^{2n_T+1}, \{\vartheta_j^T\}_{j=1}^{2n_T+1})
\] 
where $z_j^P$ and $z_j^T$ are as above, and each angle $\{\vartheta_j^{NP}\}_{j=1}^{n_P}, \{\vartheta_j^{SP}\}_{j=1}^{n_P}, \{\vartheta_j^T\}_{j=1}^{2n_T+1} $ is randomly uniformly drawn. More precisely, the points are the following
\[\begin{cases}
\mathcal N=(0,0,1)\\[5mm]
\left(\sqrt{1-(z_j^P)^2}\cos\left(\vartheta_j^{NP}+\dfrac{2\pi \ell}{8j} \right),\sqrt{1-(z_j^P)^2}\sin\left(\vartheta_j^{NP}+\dfrac{2\pi \ell}{8j} \right),z_j^P \right) & \begin{aligned}
\ell &= 0,\ldots,8j-1,\\
j &= 1,\ldots,n_P;
\end{aligned}
\\[5mm]
\left(\sqrt{1-(z_j^T)^2}\cos\left(\vartheta_j^{T}+\dfrac{2\pi \ell}{m_j} \right),\sqrt{1-(z_j^T)^2}\sin\left(\vartheta_j^{T}+\dfrac{2\pi \ell}{m_j} \right),z_j^T \right) & \begin{aligned}
\ell &= 0,\ldots,m_j-1,\\
j &= 1,\ldots,2n_T+1;
\end{aligned}
\\[5mm]
\left(\sqrt{1-(z_j^P)^2}\cos\left(\vartheta_j^{SP}+\dfrac{2\pi \ell}{8j} \right),\sqrt{1-(z_j^P)^2}\sin\left(\vartheta_j^{SP}+\dfrac{2\pi \ell}{8j} \right),-z_j^P \right) & \begin{aligned}
\ell &= 0,\ldots,8j-1,\\
j &= 1,\ldots,n_P;
\end{aligned}
\\[5mm]
\mathcal S=(0,0,-1).
\end{cases}\]
\end{definition}

\section{Proof of Theorem \ref{main}}

We begin with an explicit computation of the expected value of the logarithmic energy of the Pearl ensemble. 

\begin{theorem}\label{main2}
Let $A, B\in \mathbb Q$, with $A>2$ and $B>0$. Assume that
\[N=A(2n_P+1)^2=B(2n_T+1)^2.\]
Then the expected value for the logarithmic energy of the Pearl ensemble is
\[
\mathbb E(E_{N})=
\left(\frac12-\log2 \right)N^2-\frac12 N\log N+C(A,B)N+O(\sqrt{N}\log N),
\]
where
\begin{align*}
C(A,B)&=\frac{1}{3}\log\!\left(
\frac{8A^{7}}
{B^{\frac32}(\pi-2\Phi_0)^{3}(A-1)^{7}}
\right)
+\frac{1}{A}\log\!\left(
\frac{B(\pi-2\Phi_0)^{2}(A-1)}
{16A}
\right)
\\&-\frac{1}{A}
+\frac{B(A-2)(\pi-2\Phi_0)^{2}}
{24A}.
\end{align*}
\end{theorem}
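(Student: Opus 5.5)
Our plan rests on the exact formula for the expected energy of points on randomly rotated parallels from \cite{BE}. Take a configuration of $r_j$ equally spaced points on parallels at heights $z_j$, each parallel rotated independently and uniformly, together with the two poles. By \cite[Proposition 2.5 and its proof]{BE}, the expected energy is
\[
\mathbb E(E_N)=-\sum_j r_j\log\bigl(r_j\sqrt{1-z_j^2}\bigr)\;-\;\sum_{i\neq j} r_i r_j\,\log\bigl(\text{avg.\ distance term}\bigr),
\]
where the cross term depends only on $z_i,z_j$. More precisely, it is $-\frac{r_ir_j}{2}\log\bigl(2(1-z_iz_j)+2|z_i-z_j|\bigr)$ up to the normalization used there. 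The poles contribute $\log(1\mp z_j)$-type terms.

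\textbf{Telescoping the cross terms.} With the heights chosen as $z_j=1-(1-r_j+2\sum_{i\le j}r_i)/(N-1)$, the double sum telescopes. The result is that $\mathbb E(E_N)$ becomes $(\frac12-\log 2)N^2$ plus one-dimensional sums. These are of the form
\[
-\sum_j r_j\log r_j\;-\;\frac12\sum_j r_j\log(1-z_j^2)
\]
together with $\sum_j$ of explicit functions of the partial sums $s_j$. This is exactly the structure exploited for the Diamond ensemble in \cite{BE,Pedro}. So the first step is to write $\mathbb E(E_N)$ for the Pearl ensemble in this closed form, with $r_j=8j$ on the polar parallels and $r_j=m_j$ on the tropical ones.

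\textbf{Polar sums.} Here everything is explicit: $r_j=8j$ and $1-z_j^P=(1+8j^2)/(N-1)$, so the sums can be handled by Euler--Maclaurin/Stirling-type asymptotics for $\sum_{j\le n_P} j\log j$ and $\sum_j j\log(1+8j^2)$, with $n_P=(\sqrt{N/A}-1)/2$. These produce the terms $\frac1A\log(\cdot)$ and $-\frac1A$ in $C(A,B)$, and part of the $N\log N$ term.

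\textbf{Tropical sums.} We replace $m_j$ by $y_j$ and $s_j$ by $s'_j$, i.e.\ $z_j^T$ by $(z_j^T)'=\frac{N}{2(N-1)}(\cos\psi_{j-1}+\cos\psi_j)$. Then $y_j=N\sin\theta_j\sin(\Delta\psi/2)$ is a smooth function of $\theta_j$, and $\sum_j y_j\log y_j$ and related sums become Riemann sums over $[\Phi_0,\pi-\Phi_0]$ with step $\Delta\psi\asymp N^{-1/2}$. The midpoint rule applied to $\int \sin\theta\log\sin\theta\,d\theta$ on $[\Phi_0,\pi-\Phi_0]$ gives the boundary terms at $\cos\Phi_0=1-2/A$. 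These explain the $(A-1)$ and $A^7$ factors through $\sin^2\Phi_0=4(A-1)/A^2$. The second-order Taylor correction, from $\log\bigl(2\sin(\Delta\psi/2)\bigr)$ versus $\log\Delta\psi$ and from $1-z^2$ versus $\sin^2\theta_j$ at midpoints, is of size $N\,\Delta\psi^2\cdot N$. With $\Delta\psi^2=B(\pi-2\Phi_0)^2/N$, this yields the term $B(A-2)(\pi-2\Phi_0)^2/(24A)$, where the factor $(A-2)/A$ is the tropical mass fraction.

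\textbf{Main obstacle: controlling the rounding $m_j\ne y_j$.} The error from replacing $m_j$ by $y_j$ must be $O(\sqrt N\log N)$, not $O(N)$. A naive bound $|m_j-y_j|\le2$ times $\log y_j\sim\log N$ over $\sim\sqrt N$ parallels gives $O(\sqrt N\log N)$ for the $\sum m_j\log m_j$ term. The danger is in terms where the discrepancy is multiplied by quantities of size $N$, namely the first-order variation of $\sum_j m_j F(s_j)$ with $F$ smooth at scale $N$.

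We handle these by summation by parts, twice. Write $m_j-y_j=e_j$, $E_j=\sum_{i\le j}e_i$ and $\mathcal E_j=\sum_{k\le j}E_k$. Each summation by parts transfers a discrete derivative onto the smooth weight, gaining a factor $\Delta\psi$, i.e.\ $N^{-1/2}$. Lemma \ref{lemmadopato} iii) and iv) give $|E_j|\le1$ and $|\mathcal E_j|\le\frac12$ uniformly, so the boundary and remainder contributions are $O(\sqrt N\log N)$. This is precisely why the refined Lemma \ref{lemmadopato}, rather than Lemma \ref{lemmanondopato}, is needed. Quadratic terms in $e_j$ and $E_j$ are $O(1)$ per parallel and hence $O(\sqrt N)$ in total.

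\textbf{Assembling.} Finally we collect the $N\log N$ coefficient (which must come out to $-\frac12$) and the linear coefficient, simplifying with $\cos\Phi_0=1-2/A$, to obtain $C(A,B)$.
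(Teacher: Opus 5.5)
Your plan is essentially the paper's proof: the Beltr\'an--Etayo closed formula \eqref{exenergy}, Euler--Maclaurin (trapezoidal rule with first-derivative end corrections) for the polar sums, and the replacement of $m_j,s_j$ by $y_j,s_j'$ controlled by one or two summations by parts through Lemma \ref{lemmadopato} iii)--iv), exactly as in Proposition \ref{passo5}, followed by midpoint-rule Riemann sums in the tropical region. One bookkeeping correction: the order-$N$ corrections of relative size $(\Delta\psi)^2$ are only $O(1)$ in $\sum_j y_j\log y_j$ and arise instead in the $N^2$-weighted sum $-(N-1)\sum_j m_j(1-z_j^T)\log(1-z_j^T)$, where besides the factors $1-(\Delta\psi)^2/24$ and $1-(\Delta\psi)^2/8$ you must keep the midpoint end correction $\frac{N(N-1)}{48}(\Delta\psi)^2\bigl(f_N'(\pi-\Phi_0)-f_N'(\Phi_0)\bigr)$ of Lemma \ref{passo8} (only with it do the logarithmic pieces cancel to leave $\frac{(A-2)B(\pi-2\Phi_0)^2}{24A}$), and likewise the closed formula does not directly give $(\frac12-\log 2)N^2$; this coefficient must be verified from the polar and tropical $N^2$ terms, as in Propositions \ref{polar} and \ref{tropical}.
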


By \cite[Theorem 2.6]{BE}, the expected value of the logarithmic energy is given by
\begin{align}\label{exenergy}
\mathbb E(E_{N})=&-(N-1)\log 4+m_{n_T+1}\log(m_{n_T+1})-2\left(\sum_{j=1}^{n_P} 8j\log(8j) + \sum_{j=1}^{n_T+1}m_j\log(m_j)\right)\nonumber\\
&-(N-1)\sum_{\pm}\sum_{j=1}^{n_P} 8j(1\pm z_j^P)\log(1\pm z_j^P)-(N-1)\sum_{j=1}^{2n_T+1} m_j(1- z_j^T)\log(1- z_j^T).
\end{align}
The proof of Theorem \ref{main2} is reduced to estimating five contributions:
\[
T_{eq}=m_{n_T+1}\log(m_{n_T+1})\]
\[T_{pol,1}=-2\sum_{j=1}^{n_P} 8j\log(8j), \qquad 
T_{pol,2}=-(N-1)\sum_{\pm}\sum_{j=1}^{n_P} 8j(1\pm z_j^P)\log(1\pm z_j^P),\]
\[T_{trop,1}=-2\sum_{j=1}^{n_T+1}m_j\log(m_j), \qquad
T_{trop,2}=-(N-1)\sum_{j=1}^{2n_T+1} m_j(1- z_j^T)\log(1- z_j^T).\]
We treat them separately in the following lemmas. Let us start with $T_{eq}$ and with the polar contributions.

\begin{lemma}\label{passo1} It holds
\[T_{eq}=O(\sqrt{N}\log N) \quad \text{ and } \quad T_{pol,1}=-8n_P^2\log(8n_P)+4n_P^2+O(\sqrt{N}\log N).\]
\end{lemma}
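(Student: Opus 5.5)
The two claims are independent; I will prove the first by bounding the size of $m_{n_T+1}$, and the second by an Euler--Maclaurin type estimate of an explicit sum.

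\textbf{The term $T_{eq}$.} By Lemma \ref{lemmadopato} ii), $|m_{n_T+1}-y_{n_T+1}|\le 2$. By \eqref{y_j},
\[y_{n_T+1}=N\sin\theta_{n_T+1}\sin(\Delta\psi/2)\le N\Delta\psi/2 .\]
Since $N=B(2n_T+1)^2$, we have $\Delta\psi=(\pi-2\Phi_0)/(2n_T+1)=\sqrt{B}(\pi-2\Phi_0)/\sqrt N$, and hence $y_{n_T+1}=O(\sqrt N)$. Also $m_{n_T+1}\ge y_{n_T+1}-2>0$ for $N$ large, by the lower bound $y_j\ge 3$ used in the previous lemma. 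So $1\le m_{n_T+1}=O(\sqrt N)$, and therefore $T_{eq}=m_{n_T+1}\log m_{n_T+1}=O(\sqrt N\log N)$.

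\textbf{The term $T_{pol,1}$.} Split the logarithm:
\[
T_{pol,1}=-16\log 8\sum_{j=1}^{n_P}j-16\sum_{j=1}^{n_P}j\log j .
\]
The first piece equals $-8n_P(n_P+1)\log 8=-8n_P^2\log 8+O(n_P)$.

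For the second piece I use the standard asymptotics of $\sum j\log j$ (for instance via $\log$ of the hyperfactorial, or Euler--Maclaurin applied to $f(x)=x\log x$):
\[
\sum_{j=1}^{n}j\log j=\frac{n^2}{2}\log n-\frac{n^2}{4}+\frac n2\log n+O(\log n).
\]
Only the weaker error $O(n\log n)$ is needed here. To get it, compare the sum with $\int_1^{n}x\log x\,dx=\frac{n^2}{2}\log n-\frac{n^2}{4}+\frac14$. Since $f$ is increasing on $[1,\infty)$, the sum and the integral differ by at most $f(n)=n\log n$. Thus
\[
-16\sum_{j=1}^{n_P}j\log j=-8n_P^2\log n_P+4n_P^2+O(n_P\log n_P).
\]

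\textbf{Combining.} Adding the two pieces gives
\[
T_{pol,1}=-8n_P^2(\log 8+\log n_P)+4n_P^2+O(n_P\log n_P)=-8n_P^2\log(8n_P)+4n_P^2+O(n_P\log n_P).
\]
Finally, $N=A(2n_P+1)^2$ with $A$ fixed gives $n_P\asymp\sqrt N$, so $O(n_P\log n_P)=O(\sqrt N\log N)$.

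No step here is a real obstacle. The only point needing care is checking that $m_{n_T+1}\ge1$, which the earlier lower bound $y_j\ge 3$ together with property ii) of Lemma \ref{lemmadopato} provides.
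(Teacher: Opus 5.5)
Your proof is correct and follows essentially the paper's route: you bound $m_{n_T+1}\le y_{n_T+1}+2=O(\sqrt N)$ via Lemma \ref{lemmadopato} ii), and you compare $\sum_{j\le n_P}8j\log(8j)$ with the corresponding integral. The paper makes that comparison for $f(x)=8x\log(8x)$ on $[0,n_P]$ using a bound on $f'$, while you split off $\log 8$ and use monotonicity of $x\log x$ on $[1,\infty)$; your explicit check that $m_{n_T+1}\ge 1$ is a small detail the paper leaves implicit.
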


\begin{proof}
Notice that, by Lemma \ref{lemmadopato}, \eqref{Delta} and \eqref{y_j}, for every $j=1,\ldots,2n_T+1$,
\begin{equation}\label{my}
|m_j|\leq |y_j|+2\leq \frac{N}2 \frac{\pi}{2n_T+1}+2.
\end{equation}
By \eqref{N} this is $O(\sqrt{N})$ and the first thesis follows. 

Now set
$f(x)=8x\log(8x)$. Then
\begin{align*}
\left\vert \sum_{j=1}^{n_P} f(j)-\int_{0}^{n_P} f(x)dx\right\vert &=\left\vert\sum_{j=1}^{n_P} \int_{j-1}^j (f(j)-f(x))dx\right\vert\\
&\leq \sum_{j=2}^{n_P} \sup_{j-1\leq x\leq j}\left|f'(x)\right|+\int_{0}^1(8\log 8-8x\log(8x))dx\\
&= \sum_{j=2}^{n_P} 8(\log(8j)+1)+4\log8+2\\
&\leq 8(n_P-1)(\log(8n_P)+1)+4\log 8+2=O(\sqrt{N}\log N),
\end{align*}
by \eqref{N}. Finally,
\[\int_{0}^{n_P} f(x)dx=4n_P^2\log(8n_P)-2n_P^2.
\]
\end{proof}

We want now to estimate 
\begin{align*}T_{pol,2}&=-(N-1)\sum_{\pm}\sum_{j=1}^{n_P} 8j(1\pm z_j^P)\log(1\pm z_j^P)\\
&=-(N-1)\sum_{\pm}\sum_{j=1}^{n_P}8j\left(1\pm \left(1-\dfrac{1+8j^2}{N-1} \right) \right)\log\left(1\pm \left(1-\dfrac{1+8j^2}{N-1} \right) \right)\\
&=-(N-1)\sum_{\pm}\sum_{j=1}^{n_P}g_{\pm}(j),
\end{align*}
where
\[g_{\pm}(x)=8x\left(1\pm \left(1-\dfrac{1+8x^2}{N-1} \right) \right)\log\left(1\pm \left(1-\dfrac{1+8x^2}{N-1} \right) \right).\]
Following \cite{BE}, we estimate $T_{pol,2}$ by means of the trapezoidal rule.

\begin{lemma}\label{passo2} 
One has
\[\sum_{j=1}^{n_P} g_{\pm}(j) =\frac{g_{\pm}(n_P)}2+\int_{0}^{n_P} g_{\pm}(x)dx +\frac{g'_{\pm}(n_P)-g'_{\pm}(0)}{12}+O\left(N^{-1/2}\log N\right).
\]
\end{lemma}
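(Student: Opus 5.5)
The plan is to apply the Euler--Maclaurin formula with the second Bernoulli term and an integral remainder. For a $C^2$ function on $[0,n_P]$ we have the standard identity
\[
\sum_{j=0}^{n_P} g(j)=\frac{g(0)+g(n_P)}2+\int_0^{n_P}g(x)\,dx+\frac{g'(n_P)-g'(0)}{12}-\frac1{2}\int_0^{n_P}\widetilde B_2(x)\,g''(x)\,dx ,
\]
where $\widetilde B_2$ is the periodized Bernoulli polynomial, so that $|\widetilde B_2|\le 1/6$. Since $g_\pm(0)=0$, the sum from $j=0$ equals the sum from $j=1$, and the statement reduces to showing
\[
\int_0^{n_P}|g_\pm''(x)|\,dx=O(N^{-1/2}\log N).
\]

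To estimate this integral, write $u(x)=(1+8x^2)/(N-1)$. On $[0,n_P]$ we have $u\le (1+8n_P^2)/(N-1)\approx 2/A<1$ by \eqref{N}, so $u$ stays in a compact subset of $[0,1)$.

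\textbf{Case $g_-$.} Here $g_-(x)=8x\,u\log u$. For $h(u)=u\log u$ one has $h'=\log u+1$ and $h''=1/u$. Differentiating $8x\,h(u(x))$ twice, with $u'=16x/(N-1)$ and $u''=16/(N-1)$, gives terms of the form
\[
\frac{x}{N}\,(\log u+1),\qquad \frac{x^3}{N^2}\cdot\frac1u,\qquad \frac{x}{N}\,\log u .
\]
Because $u\ge 8x^2/N$, the middle term is bounded by $x/N$. The logarithm satisfies $|\log u|\le \log N+O(1)$ on the whole interval. Hence $|g_-''(x)|\lesssim x(\log N)/N$, and integrating over $[0,n_P]$ with $n_P\sim\sqrt{N}$ gives $O(\log N)$.

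This falls short of the target by a factor of $\sqrt N$, and closing this gap is the main obstacle. The fix is to track the prefactor of $T_{pol,2}$ properly. The lemma as stated should presumably be read after the natural normalization; otherwise the claimed error size clashes with the factor $(N-1)$ in $T_{pol,2}$, which must produce only $O(\sqrt N\log N)$.

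So I recompute carefully. Substitute $x=\sqrt{N}\,t$ with $t\in[0,n_P/\sqrt N]$, where the upper limit is bounded. Then $u=8t^2+O(1/N)$, and
\[
g_\pm(x)=\sqrt N\,G_\pm(t),\qquad g''_\pm(x)=N^{-1/2}G_\pm''(t),
\]
where $G_\pm$ are fixed smooth functions on $(0,c]$. The function $G_+$ involves $(2-8t^2)\log(2-8t^2)$, which is smooth since $8t^2\le 2/A<1$. The function $G_-$ involves $8t^2\log(8t^2)$, whose second derivative after multiplication by $t$ behaves like $t\log t$, which is integrable.

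Therefore
\[
\int_0^{n_P}|g''_\pm(x)|\,dx=\int |G''_\pm(t)|\,dt = O(1),
\]
with the $O(1/N)$ correction inside the logarithm contributing at most $\log N$. This refinement is the step that needs care. I would bound it by splitting at $x=1$: on $[1,n_P]$ use the $1/u$ bound, and on $[0,1]$ use $u\ge 1/N$, so that $\log u=O(\log N)$ on a unit-length interval.

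Assembling the pieces gives the remainder. Its size should come out as $O(N^{-1/2}\log N)$ once one notes that $g_\pm$ actually carries the scale $1/N$ coming from $u$. Indeed, $g_\pm=8x\,(1\pm z)\log(1\pm z)$ with $1-z=u=O(1)$, so the true scaling is $g''\sim N^{-1/2}$ times $\log N$. I would verify this exponent bookkeeping explicitly term by term, since the whole lemma hinges on getting the power of $N$ in $g''_\pm$ right.
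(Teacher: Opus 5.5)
There is a genuine gap, in the remainder estimate. Your Euler--Maclaurin identity is correct, and so is the use of $g_\pm(0)=0$. But bounding the remainder by $\frac1{12}\int_0^{n_P}|g''_\pm(x)|\,dx$ can never give $O(N^{-1/2}\log N)$, because this integral is of exact order one. Your own rescaling already shows this: $g''_\pm=N^{-1/2}G''_\pm$ on an interval of length $n_P\asymp\sqrt N$, so $\int_0^{n_P}|g''_\pm|\,dx=\int|G''_\pm(t)|\,dt$. Concretely, with $u=\frac{1+8x^2}{N-1}$,
$g''_-(x)=\frac{384x}{N-1}\log u+\frac{128x(1+24x^2)}{(N-1)(1+8x^2)}+\frac{256x}{N-1}$.
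The substitution $v=8x^2/(N-1)$ then gives $\int_0^{n_P}|g''_-|\,dx\to\int_0^{2/A}|24\log v+40|\,dv>0$.

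Your closing paragraph conflates the pointwise size of $g''_\pm$ (about $N^{-1/2}$, up to a logarithm) with its integral, which picks up the extra factor $n_P\asymp\sqrt N$. There is also no clash with the prefactor of $T_{pol,2}$. The bound $(N-1)\cdot O(N^{-1/2}\log N)=O(\sqrt N\log N)$ is exactly what Proposition~\ref{polar} needs. An $O(1)$ remainder here would instead leave an $O(N)$ error in $T_{pol,2}$ and destroy the linear coefficient $C(A,B)$.

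The missing idea is to exploit the oscillation of $\widetilde B_2$ rather than its absolute value, i.e.\ to go one order further. Since $\widetilde B_2=\frac13\widetilde B_3'$ and $\widetilde B_3$ vanishes at the integers, integrating by parts gives $-\frac12\int_0^{n_P}\widetilde B_2\,g''_\pm\,dx=\frac16\int_0^{n_P}\widetilde B_3\,g'''_\pm\,dx$. This is bounded by a constant times $n_P\sup_{[0,n_P]}|g'''_\pm|$, which is precisely the trapezoidal rule with third-derivative remainder, \cite[Lemma A.2]{BE}, that the paper uses.

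At the natural scale $x\asymp\sqrt N$, each derivative gains a factor $N^{-1/2}$:
\begin{itemize}
\item[$\cdot$] $g'''_-=\frac{384}{N-1}\log u+O(N^{-1})$ with $|\log u|\le\log(N-1)$ on $[0,n_P]$, so $\sup|g'''_-|=O(N^{-1}\log N)$;
\item[$\cdot$] $\sup|g'''_+|=O(N^{-1})$, because $2-u$ stays bounded away from $0$.
\end{itemize}
Hence the remainder is $O(n_PN^{-1}\log N)=O(N^{-1/2}\log N)$, as stated in the lemma.
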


\begin{proof}
Notice that
\[g_{+}(x)=8x\left(2-\dfrac{1+8x^2}{N-1}  \right)\log\left(2-\dfrac{1+8x^2}{N-1}  \right),\]
\[g_{-}(x)=8x\dfrac{1+8x^2}{N-1} \log\left(\dfrac{1+8x^2}{N-1} \right).\]
By the trapezoidal rule (see \cite[Lemma A.2]{BE}),
\[
\left|\sum_{j=1}^{n_P} g_{\pm}(j) +\frac{g_{\pm}(0)}2-\frac{g_{\pm}(n_P)}2-\int_{0}^{n_P} g_{\pm}(x)dx -\frac{g'_{\pm}(n_P)-g'_{\pm}(0)}{12}\right|\leq  n_P\frac{\zeta(3)}{4\pi^3}\sup_{0\leq x\leq n_P}\left|g^{(3)}_{\pm}(x)\right|.
\]
Since $n_P=O(\sqrt{N})$, $g_{\pm}(0)=0$ and
\[\sup_{0\leq x\leq n_P}\left|g^{(3)}_{-}(x)\right|=O\left(\frac{\log N}N\right) \quad \text{ and } \quad \sup_{0\leq x\leq n_P}\left|g^{(3)}_{+}(x)\right|=O\left(\frac{1}N\right),\]
then the thesis follows.
\end{proof}

\begin{proposition}\label{polar}
The total polar contribution satisfies
\[T_{pol,1}+T_{pol,2}=P_2(A)N^2+P_{\log}(A)N\log N+P_1(A)N+O(\sqrt{N}\log N),\]
where
\begin{align*}
P_2(A)&=\left(1-\frac{2}{A}\right)\log\left(1-\frac{1}{A} \right)+\frac{\log(A-1)}{A^2}+\frac{1-2\log2}{A},\\
P_{\log}(A)&=-\frac1A,\\
P_1(A)&=\left(\frac73-\frac1A \right)\log\left(\frac A{A-1} \right)+2\left(1-\frac1A \right)\log2-\frac1A.
\end{align*}
\end{proposition}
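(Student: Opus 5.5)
The proof combines Lemma \ref{passo1} and Lemma \ref{passo2} and then expands everything in powers of $N$, using $N=A(2n_P+1)^2$. From this relation, $n_P=\frac12\sqrt{N/A}-\frac12$, so
\[
n_P^2=\frac{N}{4A}-\frac12\sqrt{N/A}+\frac14 .
\]
Hence every quantity of the form $n_P^2\cdot(\text{bounded})$ or $n_P^2\log n_P$ equals its leading part up to $O(\sqrt N\log N)$.

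\textbf{The term $T_{pol,1}$.} By Lemma \ref{passo1}, $T_{pol,1}=-8n_P^2\log(8n_P)+4n_P^2+O(\sqrt N\log N)$. Substituting $8n_P=4\sqrt{N/A}+O(1)$ gives
\[
T_{pol,1}=-\frac{2N}{A}\Big(\log 4+\tfrac12\log N-\tfrac12\log A\Big)+\frac NA+O(\sqrt N\log N).
\]
This already produces $P_{\log}(A)=-1/A$, together with a linear contribution $\frac NA\big(1-2\log 4+\log A\big)$.

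\textbf{The term $T_{pol,2}$.} By Lemma \ref{passo2}, after multiplying by $-(N-1)$, I need four pieces:
\begin{itemize}
\item[(a)] the integral $\int_0^{n_P}g_\pm$, multiplied by $N-1$, to precision $O(\sqrt N\log N)$, which means the integral itself to relative order $N^{-3/2}\log N$;
\item[(b)] the endpoint term $g_\pm(n_P)/2$ times $N$;
\item[(c)] the Euler--Maclaurin term $(g_\pm'(n_P)-g_\pm'(0))/12$ times $N$;
\item[(d)] the remainder, which Lemma \ref{passo2} already makes $O(\sqrt N\log N)$ after multiplying by $N$.
\end{itemize}

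For the integral I substitute $u=(1+8x^2)/(N-1)$, so $8x\,dx=\frac{N-1}{2}du$. This turns $\int_0^{n_P}g_\pm$ into
\[
\frac{N-1}{2}\int_{u_0}^{u_1}\phi_\pm(u)\,du,\qquad \phi_+(u)=(2-u)\log(2-u),\quad \phi_-(u)=u\log u,
\]
with $u_0=1/(N-1)$ and $u_1=(1+8n_P^2)/(N-1)$. The antiderivatives are explicit, namely $\frac{u^2}{2}\log u-\frac{u^2}{4}$ and its reflection. Hence
\[
-(N-1)\int_0^{n_P} g_\pm=-\frac{(N-1)^2}{2}\big[\Phi_\pm(u_1)-\Phi_\pm(u_0)\big].
\]
It remains to expand $u_1$ in $N$. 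Using $8n_P^2=\frac{2N}{A}-2\sqrt{N/A}+2$,
\[
u_1=\frac2A-\frac{2}{\sqrt{AN}}+O(N^{-1}).
\]
Since the prefactor is $N^2$, the $N^{-1/2}$ correction in $u_1$ produces an $N^{3/2}$ term. This term must cancel against the endpoint term $\frac{N}{2}g_\pm(n_P)$, which is of order $N\cdot n_P=N^{3/2}$.

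\textbf{Main obstacle.} The hard step is bookkeeping this cancellation of all $N^{3/2}$ terms, and likewise keeping the $O(N^{-1})$ part of $u_1$ and the $(N-1)^2$ versus $N^2$ discrepancy, since both feed into the linear coefficient. I would handle it by Taylor expanding
\[
\Phi_\pm(u_1)=\Phi_\pm(2/A)+\Phi_\pm'(2/A)\,\delta+\tfrac12\Phi_\pm''(2/A)\,\delta^2,\qquad \delta=u_1-\frac2A .
\]
The endpoint term equals $\frac N2\,8n_P\,\phi_\pm(u_1)$, and $8n_P\sim 4\sqrt{N/A}$. Checking that the $\delta$-linear $N^{3/2}$ pieces cancel against it is then a mechanical consistency check.

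The lower endpoint $u_0=1/(N-1)$ contributes:
\begin{itemize}
\item for $\phi_-$, the piece $(N-1)^2u_0^2\log u_0=O(\log N)$;
\item for $\phi_+$, a constant $\Phi_+(2)$-type term. This term combines with the $-(N-1)\log 4$ of \eqref{exenergy}, but that belongs elsewhere. Here I keep $\Phi_+(u_0)$ expanded to order $u_0$, which yields a $2\log 2$-type linear term.
\end{itemize}

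For the $g'$ terms in (c):
\begin{itemize}
\item $g_\pm'(0)=8\phi_\pm(1/(N-1))$, so $N g_-'(0)=O(\log N)$ and $Ng_+'(0)=8N\cdot 2\log 2+O(1)$. The latter is linear in $N$ and must be kept.
\item $g_\pm'(n_P)=8\phi_\pm(u_1)+O(1)\cdot 16 n_P^2\phi_\pm'(u_1)/N$, which is $O(1)$, so it contributes at order $N$ too and must be kept.
\end{itemize}

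Finally I collect the $N^2$ coefficients. They come only from $-\frac{N^2}{2}\big[\Phi_+(2/A)-\Phi_+(0)+\Phi_-(2/A)\big]$, which should simplify to $P_2(A)$. I then collect all $N$ coefficients, including those from $T_{pol,1}$, and simplify them to $P_1(A)$.
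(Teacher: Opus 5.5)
Your plan is the paper's proof: Lemma \ref{passo1} for $T_{pol,1}$, the trapezoidal rule of Lemma \ref{passo2}, the substitution $u=(1+8x^2)/(N-1)$ to get closed-form integrals, then the endpoint and derivative terms, and finally the substitution \eqref{N}. The paper leaves that last step implicit after writing everything in closed form; your Taylor expansion about $u=2/A$ is a tidy way to organize it. Your treatment of $T_{pol,1}$, of the $N\log N$ term, and of the $N^2$ coefficient $-\frac{N^2}{2}\big[\Phi_+(2/A)-\Phi_+(0)+\Phi_-(2/A)\big]=P_2(A)N^2$ is correct.

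However, the one explicit expansion you wrote down is wrong, and it sits in the step you yourself call the main obstacle. From $2n_P+1=\sqrt{N/A}$ one gets $4n_P^2=\frac NA-2\sqrt{N/A}+1$, hence
\[
8n_P^2=\frac{2N}{A}-4\sqrt{N/A}+2,\qquad u_1=\frac2A-\frac{4}{\sqrt{AN}}+\frac{3+2/A}{N}+O(N^{-3/2}).
\]
You have $-2\sqrt{N/A}$ and $-2/\sqrt{AN}$ instead. With your value the consistency check you propose would fail. Write $\phi=\phi_++\phi_-$. The integral would then contribute $+N^{3/2}\phi(2/A)/\sqrt A$, while the endpoint term $-\frac{N-1}{2}\,8n_P\,\phi(u_1)$ contributes $-2N^{3/2}\phi(2/A)/\sqrt A$. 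The $\delta$-dependent linear terms would also be wrong; for instance the $\frac12\Phi''\delta^2$ piece would be off by a factor $4$. With the correct $\delta$, the $N^{3/2}$ terms cancel exactly.

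Likewise, in (c) the second piece of $g'_\pm(n_P)$ is exactly $\frac{128n_P^2}{N-1}\phi_\pm'(u_1)\to\frac{32}{A}\phi_\pm'(2/A)$. The constant matters, because it feeds the linear coefficient.

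With these corrections, put $v=2/A$ and $\Phi=\Phi_++\Phi_-$. The linear coefficient of $T_{pol,2}$ is then
\[
\Phi(v)-\Big(\frac16+\frac v2\Big)\phi(v)+\frac23\,v\,\phi'(v)+\frac{13}{3}\log 2-1 .
\]
Adding $\frac1A(1-4\log 2+\log A)$ from $T_{pol,1}$ gives exactly $P_1(A)$, so the argument goes through once the arithmetic is fixed.
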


\begin{proof}
By Lemma \ref{passo1}, 
\[T_{pol,1}=-8n_P^2\log(8n_P)+4n_P^2+O(\sqrt{N}\log N).\]
Now we consider $T_{pol,2}$. By Lemma \ref{passo2},
\[(N-1)\sum_{j=1}^{n_P} g_{\pm}(j) =(N-1)\left(\frac{g_{\pm}(n_P)}2+\int_{0}^{n_P} g_{\pm}(x)dx +\frac{g'_{\pm}(n_P)-g'_{\pm}(0)}{12}\right)+O\left(\sqrt{N}\log N\right).
\]
One has
\[(N-1)g_{-}(n_P)=8n_P(1+8n_P^2) \log\left(\dfrac{1+8n_P^2}{N-1} \right)\]
and
\[(N-1)g_{+}(n_P)=8(N-1)n_P\left(2-\dfrac{1+8n_P^2}{N-1}  \right)\log\left(2-\dfrac{1+8n_P^2}{N-1}  \right).\]
Therefore
\begin{align*}
&\frac{N-1}2(g_{+}(n_P)+g_{-}(n_P))\\
&=8(N-1)n_P\log\left(2-\dfrac{1+8n_P^2}{N-1}  \right)-4n_P(1+8n_P^2) \left(\log\left(2-\dfrac{1+8n_P^2}{N-1}  \right)- \log\left(\dfrac{1+8n_P^2}{N-1} \right)\right).
\end{align*}
\begin{comment}
and hence
\begin{aligned}
&\frac{N-1}2\bigl(g_-(n_P)+g_+(n_P)\bigr)\\
&=
\underbrace{32n_P^3\log\left(\frac{8n_p^2}{N} \right)
+8Nn_P\log\!\left(2-\frac{8n_P^2}{N}\right)
-32n_P^3\log\!\left(2-\frac{8n_P^2}{N}\right)
}_{\mathcal{O}(N^{3/2})}
+o(N).
\end{aligned}
\end{comment}
Now, let us calculate
\[
(N-1)\int_0^{n_P} g_{-}(x)\,dx.
\]
Changing variable, 
$u=(1+8x^2)/(N-1)$, 
one has
\[
\int_0^{n_P} g_{-}(x)\,dx
=\frac{N-1}2\int_{1/(N-1)}^{(1+8n_P^2)/(N-1)}{u}\log\!\left({u}\right)\,du.
\]
Since
\[
\int u\log u\,du=\frac{u^2}{2}\log u-\frac{u^2}{4}+c,
\]
one obtains
\begin{align*}
(N-1)\int_0^{n_P} g_{-}(x)\,dx
&=
\frac14(1+8n_P^2)^2\log\!\left(\frac{1+8n_P^2}{N-1}\right)
-\frac18\Big((1+8n_P^2)^2-1\Big)
+\frac14\log(N-1)\\
&=
\frac14(1+8n_P^2)^2\log\!\left(\frac{1+8n_P^2}{N-1}\right)
-\frac18\Big((1+8n_P^2)^2-1\Big)+O(\log N).%\\
%&=16n_P^4\log\left(\frac{8n_P^2}{N} \right)-8n_P^4+4n_P^2\log\left(\frac{8n_P^2}{N} \right) +o(N).
\end{align*}
Similarly, we want to compute
\[
(N-1)\int_0^{n_P} g_{+}(x)\,dx.
\]
With the substitution
\[
u=2-\frac{1+8x^2}{N-1}, \qquad du=-\frac{16x}{N-1}dx,
\]
one obtains
\[
\int_0^{n_P} g_{+}(x)\,dx
=\frac{N-1}{2}\int_{2-\frac{1+8n_P^2}{N-1}}^{2-\frac1{N-1}} u\log u\,du,
\]
so that
\begin{align*}
&(N-1)\int_0^{n_P} g_{+}(x)\,dx
\\&=
\frac{(N-1)^2}{4}\Bigg[
\left(2-\frac1{N-1}\right)^2\log\left(2-\frac1{N-1}\right)
-\left(2-\frac{1+8n_P^2}{N-1}\right)^2\log\left(2-\frac{1+8n_P^2}{N-1}\right)
\Bigg]\\
&\quad -\frac{(N-1)^2}{8}\Bigg[
\left(2-\frac1{N-1}\right)^2
-\left(2-\frac{1+8n_P^2}{N-1}\right)^2
\Bigg]\\
&=(N-1)^2\left(\log 2-\frac12 \right)-(N-1)\log 2\\
&\quad -\frac{(N-1)^2}{4}\left(2-\frac{1+8n_P^2}{N-1}\right)^2\left(\log\left(2-\frac{1+8n_P^2}{N-1}\right)-\frac12\right)+O(1).%\\
%&=N^2\log 2-\frac{N^2}4\left(2-\frac{8n_P^2}N\right)^2\log\left(2-\frac{8n_P^2}N\right)-4Nn_P^2+8n_P^4+o(N).
\end{align*}
Therefore,
\[
\begin{aligned}
&(N-1)\left(\int_0^{n_P} g_{+}(x)\,dx+\int_0^{n_P} g_{-}(x)\,dx\right)\\
&=\frac14(1+8n_P^2)^2\log\!\left(\frac{1+8n_P^2}{N-1}\right)
-\frac18\Big((1+8n_P^2)^2-1\Big)+(N-1)^2\left(\log 2-\frac12 \right)-(N-1)\log 2\\
&\quad -\frac{(N-1)^2}{4}\left(2-\frac{1+8n_P^2}{N-1}\right)^2\left(\log\left(2-\frac{1+8n_P^2}{N-1}\right)-\frac12\right)+O(\log N)\\
%&=
%\underbrace{\left(
%16n_P^4\log\left(\frac{8n_P^2}{N}\right)
%-\frac{N^2}{4}\left(2-\frac{8n_P^2}{N}\right)^2
%\log\left(2-\frac{8n_P^2}{N}\right)
%+N^2\log{2}
%-4Nn_P^2
%\right)}_{\mathcal O(N^2)}
%\\[2mm]
%&\quad
%+\underbrace{\left(
%4n_P^2\log\left(\frac{8n_P^2}{N}\right)
%\right)}_{\mathcal O(N)}
%+o(N).
\end{aligned}
\]
Now, notice that
\begin{align*}
g'_{-}(0)&=-\frac{8}{N-1}\log(N-1),\\ g'_{-}(n_P)&=\frac{8(1+24n_P^2)}{N-1}\log\left(\frac{1+8n_P^2}{N-1}\right)+\frac{128n_P^2}{N-1}
\end{align*}
and
\begin{align*}
g'_{+}(0)&=8\left(2-\frac1{N-1} \right)\log\left(2-\frac1{N-1} \right)\\  g'_{+}(n_P)&=8\left(2-\frac{1+24n_P^2}{N-1} \right)\log\left(2-\frac{1+8n_P^2}{N-1} \right)-\frac{128n_P^2}{N-1}.
\end{align*}
It follows that
\begin{align*}
&\frac{N-1}{12}\left(g'_{+}(n_P)-g'_{+}(0)+g'_{-}(n_P)-g'_{-}(0) \right)\\&=
\frac{2}{3}(N-1)\left(
\frac{1+24n_P^2}{N-1}\log\left(\frac{1+8n_P^2}{N-1}\right)
+
\left(2-\frac{1+24n_P^2}{N-1}\right)
\log\left(2-\frac{1+8n_P^2}{N-1}\right)\right.
\\
&
\quad \left.-\left(2-\frac{1}{N-1}\right)\log\left(2-\frac{1}{N-1}\right)
+\frac{1}{N-1}\log(N-1)
\right)\\
&=
\frac{2}{3}
(1+24n_P^2)\log\left(\frac{1+8n_P^2}{N-1}\right)
+
\frac{2}{3}\left(2(N-1)-1-24n_P^2\right)
\log\left(2-\frac{1+8n_P^2}{N-1}\right)
\\
&
\quad -\frac{2}{3}\left(2(N-1)-1\right)\log\left(2-\frac{1}{N-1}\right)+O(\log N).
 %&=16n_P^2\log\left(\frac{8n_P^2}%{N}\right)+\left(\frac{4N}3-16n_P^2\right)
%\log\left(2-\frac{8n_P^2}{N}\right)-\frac{4N}{3}\log2-%\frac{32}3n_P^2+o(N).
\end{align*}
Summing everything together, 
\begin{align*}
&T_{pol,2}\\
&=-\frac14(1+8n_P^2)^2\log\!\left(\frac{1+8n_P^2}{N-1}\right)
+\frac18\Big((1+8n_P^2)^2-1\Big)-(N-1)^2\left(\log 2-\frac12 \right)+(N-1)\log 2\\
&\quad +\frac{(N-1)^2}{4}\left(2-\frac{1+8n_P^2}{N-1}\right)^2\left(\log\left(2-\frac{1+8n_P^2}{N-1}\right)-\frac12\right)\\
&\quad -8(N-1)n_P\log\left(2-\dfrac{1+8n_P^2}{N-1}  \right)+4n_P(1+8n_P^2) \left(\log\left(2-\dfrac{1+8n_P^2}{N-1}  \right)- \log\left(\dfrac{1+8n_P^2}{N-1} \right)\right)\\
&\quad -
\frac{2}{3}
(1+24n_P^2)\log\left(\frac{1+8n_P^2}{N-1}\right)
-
\frac{2}{3}\left(2(N-1)-1-24n_P^2\right)
\log\left(2-\frac{1+8n_P^2}{N-1}\right)
\\
&
\quad +\frac{2}{3}\left(2(N-1)-1\right)\log\left(2-\frac{1}{N-1}\right)
 +O(\sqrt{N}\log N).
\end{align*}
By \eqref{N}, considering $T_{pol,1}$ and $T_{pol,2}$, the thesis follows.
\end{proof}

Let us now consider the tropical contributions, starting from 
\begin{align*}
T_{trop,1}&=-2\sum_{j=1}^{n_T+1}m_j\log(m_j).
\end{align*}

\begin{lemma}\label{passo4} One has
\begin{align*}
T_{trop,1}&=-\frac{A-2}{2A}N\log N\\
&+\left[-\frac{A-2}A\log\left(\frac{(\pi-2\Phi_0)\sqrt{B}}{2} \right)+\frac{A-2}{A}\left(1+\log\frac A2 \right)-\frac{A-1}A\log(A-1)\right]N\\
&+O(\sqrt{N}\log N).
\end{align*}
\end{lemma}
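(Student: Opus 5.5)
First replace $m_j$ by $y_j$ inside $T_{trop,1}$, and then evaluate $-2\sum_{j=1}^{n_T+1} y_j\log y_j$ with a Riemann-sum/Euler--Maclaurin argument.

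\textbf{Step 1 (replacing $m_j$ by $y_j$).} By Lemma~\ref{lemmadopato} we have $|m_j-y_j|\le 2$. For $N$ large, $y_j\ge 3$ (as in the proof of the height lemma), and in fact $y_j\asymp\sqrt N$ uniformly in $j$, because $\sin\theta_j\ge\sin(\Phi_0-\Delta\psi/2)$ is bounded below. The mean value theorem applied to $x\log x$ gives
\[
|m_j\log m_j-y_j\log y_j|\le 2\,(1+\log(y_j+2))=O(\log N).
\]
Summing over the $n_T+1=O(\sqrt N)$ indices costs $O(\sqrt N\log N)$. It therefore suffices to compute $-2\sum_{j=1}^{n_T+1}y_j\log y_j$.

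\textbf{Step 2 (splitting the logarithm).} Write $y_j=N\sin(\Delta\psi/2)\sin\theta_j=:c\,\sin\theta_j$, so that
\[
y_j\log y_j=y_j\log c+c\,\sin\theta_j\log\sin\theta_j .
\]
For the first part I use \eqref{s_j'} and symmetry: $2\sum_{j=1}^{n_T+1}y_j=(N-2K)+y_{n_T+1}=N(1-2/A)+O(\sqrt N)$. Since $\Delta\psi=\sqrt B(\pi-2\Phi_0)/\sqrt N$, we have
\[
c=\tfrac12\sqrt{N}\sqrt B(\pi-2\Phi_0)\,(1+O(1/N)), \qquad \log c=\tfrac12\log N+\log\!\bigl(\tfrac{(\pi-2\Phi_0)\sqrt B}{2}\bigr)+O(1/N).
\]
This produces the $-\frac{A-2}{2A}N\log N$ term and the term $-\frac{A-2}{A}\log\frac{(\pi-2\Phi_0)\sqrt B}{2}$. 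The $O(\sqrt N)$ error multiplied by $\log c$ is $O(\sqrt N\log N)$.

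\textbf{Step 3 (the remaining sum).} The second part is $-2c\sum_{j=1}^{n_T+1}\sin\theta_j\log\sin\theta_j$. This is a midpoint sum with nodes $\theta_j$ and step $\Delta\psi$ over $[\Phi_0,\pi/2]$, since the last node $\theta_{n_T+1}=\pi/2$ sits at the endpoint. The half-cell overshoot there contributes $O(1)$ terms, and the midpoint-rule error for a smooth integrand bounded away from $\theta=0$ is $O(\Delta\psi)$ in total. Hence
\[
\sum_j \sin\theta_j\log\sin\theta_j=\frac{1}{\Delta\psi}\int_{\Phi_0}^{\pi/2}\sin\theta\log\sin\theta\,d\theta+O(1),
\]
and multiplying by $2c\approx N\Delta\psi$ gives $-N\int_{\Phi_0}^{\pi/2}\sin\theta\log\sin\theta\,d\theta+O(\sqrt N)$.

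\textbf{Step 4 (evaluating the integral).} With $u=\cos\theta$ the integral becomes
\[
\frac12\int_0^{u_0}\log(1-u^2)\,du, \qquad u_0=\cos\Phi_0=1-\tfrac2A.
\]
Using the antiderivative $(1+u)\log(1+u)-(1-u)\log(1-u)-2u$, with $1+u_0=2(A-1)/A$ and $1-u_0=2/A$, this yields the bracket $\frac{A-2}{A}(1+\log\frac A2)-\frac{A-1}{A}\log(A-1)$ after simplification.

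I expect the main obstacle to be bookkeeping: keeping the errors from Steps 1 and 3 at $O(\sqrt N\log N)$ rather than $O(N)$, and handling the endpoint term at $j=n_T+1$ correctly (it is counted once, not twice, in the symmetric sum). The constant in Step 4 must also be matched carefully against the stated expression.
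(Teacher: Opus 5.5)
Your proposal is correct and follows essentially the same route as the paper: replace $m_j$ by $y_j$ at cost $O(\sqrt{N}\log N)$ using $|m_j-y_j|\le 2$, split the logarithm, approximate the $\sin\theta\log\sin\theta$ sum by a Riemann sum on $[\Phi_0,\pi/2]$ (absorbing the half-cell overshoot past $\pi/2$), and evaluate the integral, which you do correctly. The only differences are small streamlinings. You keep the exact factor $c=N\sin(\Delta\psi/2)$ instead of linearizing to $\widetilde{y_j}=\frac N2\Delta\psi\sin\theta_j$. You also compute $2\sum_{j=1}^{n_T+1}y_j=(N-2K)+y_{n_T+1}$ exactly from \eqref{s_j'} and symmetry, rather than through a second Riemann sum, which is slightly cleaner.
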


\begin{proof} First, we want to replace $m_j$ with $y_j$. One has
\begin{align*}
&\left|\sum_{j=1}^{n_T+1}\left(m_j\log(m_j)-y_j\log(y_j) \right)\right|=\left|\sum_{j=1}^{n_T+1}\left((m_j-y_j)\log(m_j)+y_j\log\left(\frac{m_j}{y_j}\right) \right)\right|\\
&\leq \sum_{j=1}^{n_T+1}\left|(m_j-y_j)\log(m_j)\right|+\sum_{j=1}^{n_T+1}\left|y_j\log\left(\frac{m_j}{y_j}\right) \right| \lesssim n_T\log(n_T)+\sum_{j=1}^{n_T+1}\left|y_j\frac{m_j-y_j}{y_j} \right| \\
&=O(\sqrt{N}\log N),
\end{align*}
by Lemma \ref{lemmadopato} and since
\[\log\left(\frac{m_j}{y_j} \right)=\log\left(1+\frac{m_j-y_j}{y_j} \right).\]
Next, we replace $y_j$ with its linearized form
\[\widetilde{y_j}=\frac N2\Delta\psi\sin\theta_j .\]
Notice that
\begin{align*}
\left|y_j- \widetilde{y_j}\right|&= \left|N\sin\theta_j\left(\sin\left(\frac{\Delta\psi}2 \right)-\frac{\Delta\psi}2 \right) 
\right|\\
&\leq \left|N\sin\theta_j\frac{(\Delta\psi)^3}{48} 
\right|\leq NO\left(\frac1{n_T^3} \right)=O(N^{-\frac12}).
\end{align*}
Thus, the same argument as before shows that
\[\left|\sum_{j=1}^{n_T+1}y_j\log(y_j)- \sum_{j=1}^{n_T+1}\widetilde{y_j}\log(\widetilde{y_j})\right|=O(\sqrt{N}\log N).\]
Now, 
\begin{align*}
\sum_{j=1}^{n_T+1}\widetilde{y_j}\log(\widetilde{y_j})
&=\frac N2\log\left(\frac N2\Delta\psi \right)\sum_{j=1}^{n_T+1}  \Delta\psi\sin\theta_j+\frac N2\sum_{j=1}^{n_T+1}\Delta\psi\sin\theta_j\log\left(\sin\theta_j \right).
\end{align*}
These sums are the Riemann sums of the integrals 
\[\frac N2\log\left(\frac N2\Delta\psi \right)\int_{\Phi_0}^{\Phi_0+(n_T+1)\Delta\psi}\sin\theta d\theta + \frac N2\int_{\Phi_0}^{\Phi_0+(n_T+1)\Delta\psi}\sin\theta\log(\sin \theta)d\theta,\]
and, since $\sin
\theta$ and $\sin\theta\log(\sin\theta)$ have bounded derivatives, the error committed  replacing each sum with the corresponding integrals are $O(1/n_T)$, so that the overall error is 
\[O\left(N(\log N) \frac1{n_T}+N\frac1{n_T}\right)=O(\sqrt{N}\log N).\]
Since 
\[\Phi_0+(n_T+1)\Delta\psi=\frac \pi2+\frac{\Delta\psi}{2}=\frac\pi2+O\left(\frac1{n_T} \right),\]
it suffices to calculate
\begin{align*}
&\frac N2\log\left(\frac N2\Delta\psi \right)\int_{\Phi_0}^{\frac\pi2}\sin\theta d\theta + \frac N2\int_{\Phi_0}^{\frac\pi2}\sin\theta\log(\sin \theta) d\theta\\
&=\frac N2\log\left(\frac N2\Delta\psi \right) \cos\Phi_0-\frac N2\cos\Phi_0(1-\log(\sin \Phi_0))-\frac N2\log\left|\tan \frac{\Phi_0}2 \right|.
\end{align*}
By \eqref{N} and by the definition of $\Delta\psi$, we have the thesis.
\end{proof}

Let us now consider 
\begin{align*}
T_{trop,2}&=-(N-1)\sum_{j=1}^{2n_T+1} m_j(1- z_j^T)\log(1- z_j^T)\\
&= -(N-1)\sum_{j=1}^{2n_T+1} m_j\frac{2K-1-m_j+2s_j}{N-1}\log\left( \frac{2K-1-m_j+2s_j}{N-1}\right)\\
&=-(2K-1)\sum_{j=1}^{2n_T+1} m_jL_j+\sum_{j=1}^{2n_T+1} m_j^2L_j -2\sum_{j=1}^{2n_T+1} m_js_jL_j,
\end{align*}
where
\[L_j=\log\left(\frac{2K-1-m_j+2s_j}{N-1}\right).\]
Notice that $L_j=O(1)$ since
\begin{equation}\label{Lj}1-\cos\Phi_0\leq \frac{2K-1-m_j+2s_j}{N-1}=1-z_j^T< 2.
\end{equation}

The idea now is to replace in the above sums the values $m_j$ (and therefore $s_j$) with the more explicit values $y_j$ (and $s_j'$).

\begin{proposition}\label{passo5}
Let
\[L_j'=\log\left(\frac{2K-1-y_j+2s'_j}{N-1}\right).\]
Then
\begin{align*}
\sum_{j=1}^{2n_T+1} m_jL_j&=\sum_{j=1}^{2n_T+1} y_jL_j'+O(\sqrt{N})\\
\sum_{j=1}^{2n_T+1} m_j^2L_j&=\sum_{j=1}^{2n_T+1} y_j^2L_j'+O(\sqrt{N})\\
\sum_{j=1}^{2n_T+1} m_js_jL_j&=\sum_{j=1}^{2n_T+1} y_js_j'L_j'+O(\sqrt{N}).
\end{align*}
\end{proposition}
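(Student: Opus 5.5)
The plan is to write $e_j=y_j-m_j$, $E_j=\sum_{i\le j}e_i=s_j'-s_j$ and $F_j=\sum_{k\le j}E_k$. Lemma \ref{lemmadopato} gives $|e_j|\le 2$, $|E_j|\le 1$ and $|F_j|\le\frac12$, and in addition $E_{2n_T+1}=0$. The naive bounds lose a factor $\sqrt N$ in the second and third estimates, so the idea is to throw each difference onto a smooth weight and then use summation by parts (once or twice) against the bounded partial sums $E_j$ or $F_j$.

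\textbf{Step 1: replacing $L_j$ by $L_j'$.} The arguments of the two logarithms differ by $\bigl(e_j-2E_j\bigr)/(N-1)=O(1/N)$. By \eqref{Lj} (and the same bound for the primed quantities) both arguments are bounded below by $1-\cos\Phi_0>0$. Hence $L_j-L_j'=O(1/N)$. Since $m_j,y_j=O(\sqrt N)$ and $s_j,s_j'=O(N)$, the terms $m_j(L_j-L_j')$, $m_j^2(L_j-L_j')$ and $m_js_j(L_j-L_j')$ are $O(N^{-1/2})$, $O(1)$ and $O(1)$ respectively. Summed over $2n_T+1=O(\sqrt N)$ indices they give at most $O(\sqrt N)$. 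It therefore suffices to compare the sums with $L_j'$ in all three places.

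\textbf{Step 2: regularity of the weights.} By \eqref{y_j}, \eqref{s_j'} and the formula for $(z_j^T)'$, the quantities $y_j$, $s_j'$ and $L_j'$ are restrictions to the grid $\theta_j$ (of step $\Delta\psi\asymp N^{-1/2}$) of smooth functions, with sizes $\sqrt N$, $N$ and $1$ respectively. Moreover $L_j'=\log\bigl(1-(z^T_j)'\bigr)$ with $1-(z_j^T)'\ge 1-\cos\Phi_0$. Consequently the forward differences satisfy
\[
\Delta y_j=O(1),\quad \Delta L_j'=O(N^{-1/2}),\quad \Delta^2L_j'=O(N^{-1}),\quad \Delta s_j'=y_{j+1}=O(\sqrt N),\quad \Delta^2 s_j'=O(1).
\]
From these we get $\Delta(y_jL_j')=O(1)$, and, by the discrete Leibniz rule, $\Delta^2(s_j'L_j')=O(1)$.

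\textbf{Step 3: the three sums.}
\begin{itemize}
\item First sum: $\sum e_jL_j'$ is trivially $O(\sqrt N)$.
\item Second sum: write $m_j^2-y_j^2=-2y_je_j+e_j^2$. The part $\sum e_j^2L_j'$ is $O(\sqrt N)$. For $\sum e_j\,(y_jL_j')$ we sum by parts once against $E_j$. The boundary term vanishes since $E_{2n_T+1}=0$, and the remaining sum is bounded by $\sum|E_j|\,|\Delta(y_jL_j')|=O(\sqrt N)$.
\item Third sum: write $m_js_j-y_js_j'=-e_js_j'-y_jE_j+e_jE_j$. The last term contributes $O(\sqrt N)$. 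For $\sum y_jE_jL_j'$ we use $E_j=F_j-F_{j-1}$ and sum by parts once. This gives boundary terms $O(\sqrt N)$ plus $\sum F_j\,\Delta(y_jL_j')=O(\sqrt N)$. For $\sum e_j\,s_j'L_j'$ one summation by parts only yields $\sum E_j\,\Delta(s_j'L_j')=O(N)$, which is not enough. So we sum by parts twice, using $e_j=\Delta E$ and $E_j=\Delta F$. The interior term becomes $\sum F_j\,\Delta^2(s_j'L_j')=O(\sqrt N)$.
\end{itemize}

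\textbf{Main obstacle.} The delicate point is the boundary terms of this double summation by parts. They involve $E_{2n_T+1}=0$, $F_{2n_T+1}$, and $\Delta(s_j'L_j')$ evaluated at the endpoint, which is $O(\sqrt N)$. Since $|F|\le\frac12$, these terms are still $O(\sqrt N)$. The control of $F_j$ is exactly the additional property iv) of Lemma \ref{lemmadopato}, and this is where that refinement of Lemma \ref{lemmanondopato} is indispensable.
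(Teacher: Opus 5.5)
There is a genuine gap in your Step 1, for the mixed sum. There $m_js_j$ has size $\sqrt N\cdot N=N^{3/2}$, so $m_js_j(L_j-L_j')=O(N^{3/2}\cdot N^{-1})=O(\sqrt N)$ for each $j$, not $O(1)$. Summing over $2n_T+1\asymp\sqrt N$ indices gives only $O(N)$, which is exactly the order of the term $C(A,B)N$ the whole computation is after. A sharper pointwise bound cannot fix this, because
\[
L_j-L_j'=\log\left(1+\frac{e_j-2E_j}{D_j}\right),\qquad D_j:=2K-1-y_j+2s_j'\asymp N,
\]
and $e_j-2E_j$ is genuinely of order one. So your reduction ``it suffices to compare the sums with $L_j'$ in all three places'' is not justified for the third sum. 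The first two reductions are fine, with costs $O(1)$ and $O(\sqrt N)$.

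What is missing is one more cancellation argument, and this is exactly what the paper supplies. It splits $m_js_jL_j-y_js_j'L_j'=(m_js_j-y_js_j')L_j+y_js_j'(L_j-L_j')$ and treats the last piece as follows. Expand $\log(1+x)=x+O(x^2)$; the quadratic remainder costs $O(N^{3/2}N^{-2})$ per index, hence $O(1)$ overall. The main part is $\sum_jW_j(e_j-2E_j)$ with $W_j=m_js_j/D_j$ in your splitting, or $y_js_j'/D_j$ in the paper's. One checks $W_j=O(\sqrt N)$ and $\Delta W_j=O(1)$, using $\Delta(m_js_j)=O(N)$ and $\Delta D_j=O(\sqrt N)$. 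Since $\sum_{i\le j}(e_i-2E_i)=E_j-2F_j$ satisfies $|E_j-2F_j|\le 2$ by iii) and iv), a single summation by parts gives $O(\sqrt N)$. This step also depends on property iv), so the refined lemma is needed here too, not only in the boundary terms of your double summation by parts.

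Apart from this, your Steps 2 and 3 are correct and follow the paper's argument closely:
\begin{itemize}
\item one summation by parts against $E_j$ for the quadratic term;
\item one summation by parts against $F_j$ for $\sum y_jE_jL_j'$;
\item a double summation by parts using $\Delta^2(s_j'L_j')=O(1)$ for $\sum e_js_j'L_j'$.
\end{itemize}
Working with $L_j'$ rather than $L_j$ is, if anything, cleaner, since all weights become samples of smooth functions.
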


\begin{proof}
First, let us consider the ``linear'' term. One has
\begin{align*}
&\sum_{j=1}^{2n_T+1} \left|m_jL_j-y_jL_j'\right|=\sum_{j=1}^{2n_T+1} \left|(m_j-y_j)L_j+y_j\log\left(\frac{2K-1-m_j+2s_j}{2K-1-y_j+2s'_j} \right)\right|= O(\sqrt{N}).
\end{align*}
This follows from Lemma \ref{lemmadopato}, from \eqref{Lj} and from
\begin{equation}\label{log}\log\left(\frac{2K-1-m_j+2s_j}{2K-1-y_j+2s'_j} \right)=\log\left(1+\frac{y_j-m_j-2(s_j'-s_j)}{2K-1-y_j+2s'_j} \right)=O\left(\frac1N\right).
\end{equation}

Second, we consider the ``quadratic'' term. Notice that
\begin{align*}
&\left|\sum_{j=1}^{2n_T+1} m_j^2L_j-y_j^2L_j'\right|=\left|\sum_{j=1}^{2n_T+1} (m_j^2-y_j^2)L_j+y_j^2\log\left(\frac{2K-1-m_j+2s_j}{2K-1-y_j+2s'_j} \right)\right|.
\end{align*}
Letting for simplicity $e_j=m_j-y_j$, we can write
\[
m_j^2-y_j^2=(m_j-y_j)(m_j+y_j)
=e_j(2y_j+e_j),
\]
so that we have
\[
\sum_{j=1}^{2n_T+1}
(m_j^2-y_j^2)
L_j=2\sum_{j=1}^{2n_T+1}e_jy_jL_j
+
\sum_{j=1}^{2n_T+1}e_j^2L_j.
\]
Therefore by \eqref{Lj} and Lemma \ref{lemmadopato},
\[
\sum_{j=1}^{2n_T+1}e_j^2L_j=O(n_T)=O(\sqrt{N}).
\]
Let us now consider
\[
\sum_{j=1}^{2n_T+1}e_jy_jL_j.
\]
By summation by parts,
\begin{align*}
\sum_{j=1}^{2n_T+1}e_jy_jL_j&=y_{2n_T+1}L_{2n_T+1}\sum_{i=1}^{2n_T+1}e_i-\sum_{j=1}^{2n_T}
\left(y_{j+1}L_{j+1}-y_jL_j\right)\sum_{i=1}^j
e_i\\&=-\sum_{j=1}^{2n_T}
\left(y_{j+1}L_{j+1}-y_jL_j\right)\sum_{i=1}^j
e_i.
\end{align*}
Now,
\[
y_{j+1}L_{j+1}-y_jL_j
=
(y_{j+1}-y_j)L_j
+
y_{j+1}(L_{j+1}-L_j).
\]
Notice that
\[
L_{j+1}-L_j=\log\left(\frac{2K-1-m_{j+1}+2s_{j+1}}{2K-1-m_j+2s_j} \right)=\log\left(1+\frac{m_j+m_{j+1}}{2K-1-m_j+2s_j} \right)=O(N^{-1/2})
\]
and
\begin{align*}
|y_{j+1}-y_j|&=\left|N\sin\left(\frac{\Delta \psi}{2}\right)\left[\sin\left(\Phi_0+\left(j+\frac12\right)\Delta\psi \right)-\sin\left(\Phi_0+\left(j-\frac12\right)\Delta\psi \right)\right]\right|\\
&=\left|2N\sin^2\left(\frac{\Delta \psi}{2}\right)\cos(\Phi_0+j\Delta\psi)\right|\leq 2N\sin^2\left(\frac{\Delta \psi}{2}\right)=O\left(N(\Delta\psi)^2 \right)=O(1).
\end{align*}
Hence,
\begin{equation}\label{diff}
y_{j+1}L_{j+1}-y_jL_j
=
(y_{j+1}-y_j)L_j
+
y_{j+1}(L_{j+1}-L_j)
=
O(1)
\end{equation}
and, by Lemma \ref{lemmadopato},
\[
\sum_{j=1}^{2n_T+1}e_jy_jL_j
=
O(\sqrt N).
\]
In conclusion,
\[
\sum_{j=1}^{2n_T+1}
(m_j^2-y_j^2)
L_j
=
O(\sqrt N)
\]
and by \eqref{log}
\[\left|\sum_{j=1}^{2n_T+1} (m_j^2-y_j^2)L_j+y_j^2\log\left(\frac{2K-1-m_j+2s_j}{2K-1-y_j+2s'_j} \right)\right|=O(\sqrt{N}).\]

Finally, we have to consider the ``mixed'' term. As before, one has
\begin{align*}
&\left|\sum_{j=1}^{2n_T+1} m_js_jL_j-y_js'_jL_j' \right|=\left|\sum_{j=1}^{2n_T+1} (m_js_j-y_js'_j)L_j+y_js'_j\log\left(\frac{2K-1-m_j+2s_j}{2K-1-y_j+2s'_j} \right)\right|.
\end{align*}
First, consider 
\[\sum_{j=1}^{2n_T+1} (m_js_j-y_js'_j)L_j.\]
For simplicity, let us set $S_j=\sum_{i=1}^j e_i$ for $j=1,\ldots, 2n_T+1$, and $S_0=0$. We have
\[
m_js_j-y_js'_j
=
(y_j+e_j)(s'_j+S_j)-y_js'_j
=
e_js'_j+y_jS_j+e_jS_j.
\]
Therefore
\[
\sum_{j=1}^{2n_T+1}(m_js_j-y_js'_j)L_j
=
\sum_{j=1}^{2n_T+1}e_js'_jL_j
+
\sum_{j=1}^{2n_T+1}y_jS_jL_j
+
\sum_{j=1}^{2n_T+1}e_jS_jL_j.
\]
Remember that
\[
e_j=O(1),\qquad S_j=O(1),
\qquad 
\sup_k\left|\sum_{j=1}^k S_j\right|=O(1),
\]
\[
y_j=O(\sqrt N),\qquad y_{j+1}-y_j=O(1),
\qquad s'_j=O(N),
\]
\[
L_j=O(1),\qquad L_{j+1}-L_j=O(N^{-1/2}).
\]
The estimate for the third sum follows easily, 
\[
\sum_{j=1}^{2n_T+1}e_jS_jL_j=O(n_T)=O(\sqrt N).
\]
Let us consider the second sum. Recall that by \eqref{diff}
\[y_{j+1}L_{j+1}-y_jL_j=O(1),\]
so that
\[
\sum_{j=1}^{2n_T}|y_{j+1}L_{j+1}-y_jL_j|=O(\sqrt N).
\]
By summation by parts,
\[\sum_{j=1}^{2n_T+1}S_jy_jL_j=y_{2n_T+1}L_{2n_T+1}\sum_{j=1}^{2n_T+1}S_j-\sum_{j=1}^{2n_T}(y_{j+1}L_{j+1}-y_jL_j)\sum_{i=1}^{j} S_i=O(\sqrt{N}).\]
Now, consider the first sum. Let us introduce the following notation. For any sequence $a_j$ with $j\in\mathbb Z$, we shall call $\Delta a_j=a_{j+1}-a_{j}$. If the sequence is finite, say $\{a_j\}_{j=1}^k$ we shall assume $a_j=0$ for $j<1$ and for $j>k$. Since $e_j=S_j-S_{j-1}$, and $S_0=S_{2n_T+1}=0$,
\[
\sum_{j=1}^{2n_T+1}e_js'_jL_j
=
\sum_{j=1}^{2n_T+1}(S_j-S_{j-1})s'_jL_j=-\sum_{j=1}^{2n_T}\Delta F_jS_j
\]
where
\[F_j=s'_jL_j.\]
Now,
\[\Delta F_j=\Delta s_j'L_{j+1}+ s_j'\Delta L_j\]
and
\[\Delta^2 F_j=\Delta^2 s_j'L_{j+2}+2\Delta s_j'\Delta L_{j+1}+s_j'\Delta^2 L_j\]
with
\begin{align*}
\Delta s_j'&=y_{j+1}=O(N^{1/2}),\\
\Delta^2 s_j'&=y_{j+2}-y_{j+1}=O(1),\\
\Delta L_{j+1}&=\log\left(\frac{2K-1-m_{j+2}+2s_{j+2}}{2K-1-m_{j+1}+2s_{j+1}} \right)=O(N^{-1/2}),\\
\Delta^2 L_{j}&=\log\left(1+\frac{m_{j+2}+m_{j+1}}{2K-1-m_{j+1}+2s_{j+1}} \right)+\log\left(1-\frac{m_{j+1}+m_{j}}{2K-1-m_{j+1}+2s_{j+1}}\right)\\
&=\frac{m_{j+2}+m_{j+1}}{2K-1-m_{j+1}+2s_{j+1}}-\frac{m_j+m_{j+1}}{2K-1-m_{j+1}+2s_{j+1}} +O(N^{-1})\\
&=\frac{m_{j+2}-m_{j}}{2K-1-m_{j+1}+2s_{j+1}}+O(N^{-1})=O(N^{-1}),
\end{align*}
since $m_{j+2}-m_j=O(1)$ because $y_{j+2}-y_j=O(1)$ and $|m_j-y_j|\leq 2$.
Therefore,
\[\Delta F_j=O(\sqrt{N}), \qquad \Delta^2 F_j=O(1). \]
Again, by summation by parts
\[\sum_{j=1}^{2n_T}\Delta F_jS_j=\Delta F_{2n_T}\sum_{j=1}^{2n_T}S_j-\sum_{j=1}^{2n_T-1}\Delta^2F_j\sum_{i=1}^j S_i=O(\sqrt{N})\]
In conclusion,
\[
\sum_{j=1}^{2n_T+1}
(m_js_j-y_js'_j)
L_j
=
O(\sqrt N).
\]
Let us consider now
\[\sum_{j=1}^{2n_T+1} y_js'_j\log\left(\frac{2K-1-m_j+2s_j}{2K-1-y_j+2s'_j} \right)=\sum_{j=1}^{2n_T+1} y_js'_j\log\left(1+\frac{y_j-m_j+2(s_j-s_j')}{2K-1-y_j+2s'_j} \right)\]
and its expansion
\[\sum_{j=1}^{2n_T+1} \frac{y_js'_j}{2K-1-y_j+2s'_j}\left(y_j-m_j+2(s_j-s_j')\right)+O(1).\]
Let us call
\[W_j:=\frac{y_js'_j}{2K-1-y_j+2s'_j}\]
and notice that $W_j=O(\sqrt{N})$. By summation by parts and by Lemma \ref{lemmadopato}, we have
\begin{align*}
&\sum_{j=1}^{2n_T+1} W_j\left(y_j-m_j+2(s_j-s_j')\right)\\&=W_{2n_T+1}\sum_{j=1}^{2n_T+1}\left(-e_j+2S_j\right)-\sum_{j=1}^{2n_T}(W_{j+1}-W_j)\sum_{i=1}^{j}\left(-e_i+2S_i\right)=O(\sqrt{N})
\end{align*}
since
\[W_{j+1}-W_j=O(1).\]
Indeed,
\begin{align*}
W_{j+1}-W_j&=\frac{y_{j+1}s'_{j+1}}{2K-1-y_{j+1}+2s'_{j+1}}-\frac{y_js'_j}{2K-1-y_j+2s'_j}\\
&=\frac{(y_{j+1}-y_j)s'_j}{2K-1-y_j+2s'_j}+\frac{y_{j+1}(s'_{j+1}-s'_j)}{2K-1-y_j+2s'_j}\\
&\quad +y_{j+1}s'_{j+1}\left(\frac1{2K-1-y_{j+1}+2s'_{j+1}}-\frac1{2K-1-y_j+2s'_j}\right)\\
&=\frac{(y_{j+1}-y_j)s'_j}{2K-1-y_j+2s'_j}+\frac{y_{j+1}(s'_{j+1}-s'_j)}{2K-1-y_j+2s'_j}\\
&\quad +y_{j+1}s'_{j+1}\left(\frac{y_{j+1}-y_j-2(s'_{j+1}-s'_{j})}{(2K-1-y_{j+1}+2s'_{j+1})(2K-1-y_j+2s'_j)}\right)\\
&=\frac{O(1)O(N)}{O(N)}+\frac{O(\sqrt{N})O(\sqrt{N})}{O(N)}+O(N^{\frac32})\frac{O(\sqrt{N})}{O(N^2)}.
\end{align*}
\end{proof}

By Proposition \ref{passo5}, we may replace $m_j$ with $y_j$ in the definition of $T_{trop,2}$, so that
\[T_{trop,2}=-(N-1)\sum_{j=1}^{2n_T+1} y_j(1-(z_j^T)')\log(1-(z_j^T)')+O(\sqrt{N})\]
where, by \eqref{y_j} and \eqref{s_j'},
\begin{align*}
(z_j^T)'&:=1-\dfrac{2K-1-y_j+2s'_j}{N-1}=\frac N{N-1}\frac{\cos\psi_{j-1}+\cos\psi_j}{2}\\&=\frac N{N-1}\cos\theta_j\cos\left(\frac{\Delta\psi}2 \right)=\frac N{N-1}\cos\theta_j\left(1-\frac{(\Delta\psi)^2}{8}\right) +O(N^{-2}).
\end{align*}
Notice that, by \eqref{y_j},
\[y_j=N\sin\theta_j\left(1-\frac{(\Delta \psi)^2}{24}\right)\frac{\Delta\psi}2+O(N^{-\frac32}).\]
Therefore
\begin{align*}
&y_j\left(1-(z_j^T)'\right)\log\left(1-(z_j^T)'\right)\\
&=\frac N2\Delta\psi\left(\sin\theta_j\left(1-\frac{(\Delta \psi)^2}{24}\right)+O(N^{-2}) \right)\left(1-\frac N{N-1}\cos\theta_j\left(1-\frac{(\Delta\psi)^2}{8}\right) +O(N^{-2}) \right)\\
&\quad \times\log\left(1-\frac N{N-1}\cos\theta_j\left(1-\frac{(\Delta\psi)^2}{8}\right) +O(N^{-2}) \right)=\frac N2\Delta\psi f_N(\theta_j)+O(N^{-\frac32}),
\end{align*}
where
\begin{align*}
f_{N}(\theta)&=\sin\theta\left(1-\frac{(\Delta \psi)^2}{24}\right)\left(1-\frac N{N-1}\cos\theta\left(1-\frac{(\Delta\psi)^2}{8}\right)  \right)\\
&\quad \times\log\left(1-\frac N{N-1}\cos\theta\left(1-\frac{(\Delta\psi)^2}{8}\right)  \right).
\end{align*}
Hence,
\[T_{trop,2}=-\frac{N(N-1)} 2\sum_{j=1}^{2n_T+1} f_N(\theta_j)\Delta\psi+O(\sqrt{N}).\]
Once again we will estimate this Riemann sum by means of the corresponding integral. 

\begin{lemma}\label{passo8} It holds
\begin{align*}
&T_{trop,2}=-\frac{N(N-1)}2\int_{\Phi_0}^{\pi-\Phi_0} f_N(\theta)d\theta+\frac{N(N-1)}{48}(\Delta\psi)^2(f'_N(\pi-\Phi_0)-f'_N(\Phi_0))+O(\sqrt{N}),
\end{align*}
\end{lemma}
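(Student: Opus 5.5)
We start from the identity obtained just before the statement,
\[T_{trop,2}=-\frac{N(N-1)}2\sum_{j=1}^{2n_T+1} f_N(\theta_j)\Delta\psi+O(\sqrt N),\]
and observe that the $\theta_j=\Phi_0+(j-\frac12)\Delta\psi$ are exactly the midpoints of the $2n_T+1$ subintervals of length $\Delta\psi$ partitioning $[\Phi_0,\pi-\Phi_0]$. So the sum is a composite midpoint rule, and we apply its Euler--Maclaurin expansion with one correction term:
\[
\sum_{j=1}^{2n_T+1} f_N(\theta_j)\Delta\psi=\int_{\Phi_0}^{\pi-\Phi_0}f_N(\theta)\,d\theta-\frac{(\Delta\psi)^2}{24}\bigl(f_N'(\pi-\Phi_0)-f_N'(\Phi_0)\bigr)+R,
\]
with remainder $|R|\lesssim (\Delta\psi)^4\int_{\Phi_0}^{\pi-\Phi_0}|f_N^{(4)}(\theta)|\,d\theta$. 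The simplest way to get this is to expand each cell integral $\int_{\theta_j-\Delta\psi/2}^{\theta_j+\Delta\psi/2} f_N$ by Taylor's formula around $\theta_j$. The odd terms cancel, so each cell gives $f_N(\theta_j)\Delta\psi+\frac{(\Delta\psi)^3}{24}f_N''(\theta_j)+O((\Delta\psi)^5\sup|f_N^{(4)}|)$. Summing over cells, the term $\frac{(\Delta\psi)^2}{24}\sum_j f_N''(\theta_j)\Delta\psi$ is again a midpoint sum, now for $f_N''$. It equals $\frac{(\Delta\psi)^2}{24}\bigl(f_N'(\pi-\Phi_0)-f_N'(\Phi_0)\bigr)$ plus an error $O((\Delta\psi)^4\sup|f_N^{(4)}|)$, by the same Taylor argument applied to $f_N''$.

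Multiplying through by $-\frac{N(N-1)}2$ gives exactly the main term and the boundary term $+\frac{N(N-1)}{48}(\Delta\psi)^2(f_N'(\pi-\Phi_0)-f_N'(\Phi_0))$ of the statement. The error becomes $N^2(\Delta\psi)^4\sup|f_N^{(4)}|$. Since $\Delta\psi=\sqrt B(\pi-2\Phi_0)/\sqrt N$, we have $N^2(\Delta\psi)^4=O(1)$, so it suffices that $\sup_{[\Phi_0,\pi-\Phi_0]}|f_N^{(4)}|$ be bounded uniformly in $N$.

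That uniform bound is the only step with any content, and it is where the choice of the polar cap matters. We write $f_N(\theta)=a_N\sin\theta\,h(1-b_N\cos\theta)$ with $h(u)=u\log u$, $a_N=1-(\Delta\psi)^2/24$ and $b_N=\frac N{N-1}(1-(\Delta\psi)^2/8)$. Both constants are bounded and $b_N\to1$. On the interval, $1-b_N\cos\theta\ge 1-b_N\cos\Phi_0=1-b_N(1-2/A)$. Since $A>2$ is fixed, this is bounded below by a positive constant (say $1/A$) for $N$ large, and it is bounded above by $3$. Hence $h$ and all its derivatives are smooth and bounded on the relevant range, and by the chain rule all derivatives of $f_N$ up to order four are bounded independently of $N$.

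This gives the remainder $O(1)$. Added to the $O(\sqrt N)$ already present, it yields the statement. (Even a cruder remainder $O(N^2(\Delta\psi)^3\sup|f_N'''|)=O(\sqrt N)$ would suffice, so only third derivatives really need to be controlled.)
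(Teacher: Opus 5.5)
Your proof is correct and follows the paper's argument: an iterated midpoint rule applied first to $f_N$ and then to $f_N''$, with the remainder controlled by $N^2(\Delta\psi)^4\sup|f_N^{(4)}|=O(1)$. The only addition is that you justify the uniform bound on $f_N^{(\ell)}$ for $\ell\le 4$: since $A>2$ is fixed, $1-b_N\cos\theta$ stays in a compact subinterval of $(0,\infty)$, whereas the paper merely asserts this bound.
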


\begin{proof}

Notice that $|f_N^{(\ell)}(\theta)|\leq C$ uniformly in $\theta$ and in $N$ for $\ell=0,\ldots, 4$. Then, an iterated application of the midpoint rule to $f_N$ and $f''_N$ gives
\begin{align*}
T_{trop,2}&=-\frac{N(N-1)}2\int_{\Phi_0}^{\pi-\Phi_0} f_N(\theta)d\theta+\frac{N(N-1)}2\frac{(\Delta\psi)^2}{24}(f'_N(\pi-\Phi_0)-f'_N(\Phi_0))\\&\quad+O(N^2(\Delta\psi)^4) +O(\sqrt{N})
\end{align*}
and
$O(N^2(\Delta\psi)^4) =O(1)$.
\end{proof}

\begin{proposition}\label{tropical}
The total tropical contribution satisfies
\[T_{trop,1}+T_{trop,2}=T_2(A)N^2+T_{\log}(A)N\log N+T_1(A,B)N+O(\sqrt{N}\log N),\]
where
\begin{align*}
T_2(A)&=-\frac{(A-1)^2}{A^2}\log\left(\frac{2(A-1)}{A} \right)+\frac1{A^2}\log\left(\frac2A \right)+\frac{A-2}{2A},\\
T_{\log}(A)&=-\frac{A-2}{2A},\\
T_1(A,B)&=\frac{A-2}{A}\left(\log\left(\frac{2}{(\pi-2\Phi_0)\sqrt{B}}\right)+\frac{B(\pi-2\Phi_0)^2}{24} \right).
\end{align*}
\end{proposition}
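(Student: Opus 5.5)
The plan is to add Lemma \ref{passo4}, which already gives $T_{trop,1}$ explicitly, to an explicit evaluation of the two terms in Lemma \ref{passo8}. The remaining work is computing $\int_{\Phi_0}^{\pi-\Phi_0} f_N(\theta)\,d\theta$ and the boundary correction. I will keep every term down to order $N$ and collect the errors into $O(\sqrt N\log N)$.

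\textbf{Reducing to a single integral.} I write $c_N=\frac{N}{N-1}\bigl(1-\frac{(\Delta\psi)^2}{8}\bigr)$ and $\kappa_N=1-\frac{(\Delta\psi)^2}{24}$. Then
\[
f_N(\theta)=\kappa_N\sin\theta\,(1-c_N\cos\theta)\log(1-c_N\cos\theta).
\]
Substituting $u=1-c_N\cos\theta$ gives $du=c_N\sin\theta\,d\theta$. Using $\cos\Phi_0=1-2/A$, the limits become $u_\pm=1\pm c_N(1-2/A)$. Hence
\[
\int_{\Phi_0}^{\pi-\Phi_0}f_N\,d\theta=\frac{\kappa_N}{c_N}\Bigl[\tfrac{u^2}{2}\log u-\tfrac{u^2}{4}\Bigr]_{u_-}^{u_+}.
\]
Write $G(c)=\bigl[\tfrac{u^2}{2}\log u-\tfrac{u^2}{4}\bigr]_{1-c(1-2/A)}^{1+c(1-2/A)}$. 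I will expand $\frac{\kappa_N}{c_N}G(c_N)$ around $c=1$ to first order in $c_N-1=\frac1N-\frac{B(\pi-2\Phi_0)^2}{8N}+O(N^{-2})$, using $(\Delta\psi)^2=B(\pi-2\Phi_0)^2/N$. Then:
\begin{itemize}
\item The value $G(1)$, evaluated at $u_+=2(A-1)/A$ and $u_-=2/A$, multiplied by $-N(N-1)/2$, should give the $N^2$ terms of $T_2(A)$ involving $\log\frac{2(A-1)}{A}$, $\log\frac 2A$ and the constant.
\item The $-\frac{N}{2}\cdot G(1)$ part coming from $N(N-1)=N^2-N$ contributes at order $N$.
\item The first-order corrections from $\kappa_N$ and $c_N$ contribute at order $N$ as well. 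These involve $G'(1)$, computed from $\frac{d}{dc}$ of the antiderivative as $(1-\frac2A)\,(u\log u)\big|_{u_\pm}$ summed appropriately.
\end{itemize}

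\textbf{Boundary term.} The correction $\frac{N(N-1)}{48}(\Delta\psi)^2\bigl(f_N'(\pi-\Phi_0)-f_N'(\Phi_0)\bigr)$ is already $O(N)$. I can therefore replace $f_N$ by its limit $f(\theta)=\sin\theta(1-\cos\theta)\log(1-\cos\theta)$ at cost $O(1)$, and use $N(N-1)(\Delta\psi)^2=B(\pi-2\Phi_0)^2N+O(\sqrt N)$. Differentiating gives
\[
f'(\theta)=\cos\theta\,(1-\cos\theta)\log(1-\cos\theta)+\sin^2\theta\,(\log(1-\cos\theta)+1),
\]
which is evaluated at $\cos\theta=\pm(1-2/A)$ and $\sin^2\theta=4(A-1)/A^2$.

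\textbf{Assembly and main obstacle.} I then add the $N\log N$ term $-\frac{A-2}{2A}$ from Lemma \ref{passo4}; the tropical integral produces no $N\log N$ term. Finally I collect the order-$N$ coefficients. The main obstacle is this bookkeeping: showing that all logarithms of $A$, $A-1$ and $2$ cancel, leaving exactly
\[
\frac{A-2}{A}\Bigl(\log\frac{2}{(\pi-2\Phi_0)\sqrt B}+\frac{B(\pi-2\Phi_0)^2}{24}\Bigr).
\]
The terms in $B(\pi-2\Phi_0)^2$ arise from three sources: $\kappa_N$, the $-(\Delta\psi)^2/8$ in $c_N$, and the boundary term. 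I would first check that these combine to $\frac{A-2}{24A}B(\pi-2\Phi_0)^2$. I would then check separately that the $B$-independent pieces cancel against the $\log(A/2)$, $\log(A-1)$ and $+1$ terms in Lemma \ref{passo4}. If a discrepancy appears, I would suspect the $\frac1N$ part of $c_N$ and the $N(N-1)$ versus $N^2$ factor, since these are easy to lose.
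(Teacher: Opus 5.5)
Your proposal is correct and is essentially the paper's own argument: the same substitution $u=1-c_N\cos\theta$ in the integral of Lemma~\ref{passo8}, a first-order expansion in $1/N$ (including the order-$N$ piece coming from $N(N-1)=N^2-N$), the boundary term evaluated at leading order with $f_N$ replaced by its limit, and then addition of Lemma~\ref{passo4}. The bookkeeping you flag does close: the $B$-independent order-$N$ terms cancel exactly against those of Lemma~\ref{passo4}, and the three $B(\pi-2\Phi_0)^2$ contributions sum to $\frac{A-2}{24A}B(\pi-2\Phi_0)^2$. If you compare against the paper's intermediate display, note that the boundary term equals $\frac{B(\pi-2\Phi_0)^2}{24A^2}\bigl[A(A-2)\log\frac A2+(A-1)(4-A)\log(A-1)\bigr]N+O(1)$, so the coefficient $A^2-4A+2$ printed there is a misprint for $A^2-2A$.
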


\begin{proof}
The expansion for $T_{trop,1}$ is in Lemma \ref{passo4}.

Let us consider the expansion of $T_{trop,2}$ in Lemma \ref{passo8}. For the first term, by the change of variable
\[u=1-\frac N{N-1}\cos\theta\left(1-\frac{(\Delta\psi)^2}{8}\right),\]
one has
\begin{align*}
\frac{N(N-1)}2\int_{\Phi_0}^{\pi-\Phi_0} f_N(\theta)d\theta&=\frac{(N-1)^2}2 \frac{1-\dfrac{(\Delta\psi)^2}{24}}{1-\dfrac{(\Delta\psi)^2}{8}}\\
&\times\left(\frac{u_+^2}{2}\log u_+-\frac{u^2_-}{2}\log u_- -\frac N{N-1}\left(1-\frac{(\Delta\psi)^2}8 \right)\cos\Phi_0\right),
\end{align*}
with
\[u_{\pm}=1\pm \frac{N}{N-1}\left(1-\frac{(\Delta\psi)^2}8 \right)\cos\Phi_0.\]
Expanding in decreasing powers of $N$ and recalling \eqref{N}, we have
\begin{align*}
&-\frac{N(N-1)}2\int_{\Phi_0}^{\pi-\Phi_0} f_N(\theta)d\theta\\
&=-\frac{N^2}{2}
\left[
\frac{(1+\cos\Phi_0)^2}{2}\log(1+\cos\Phi_0)
-\frac{(1-\cos\Phi_0)^2}{2}\log(1-\cos\Phi_0)
-\cos\Phi_0
\right]\\
&
\quad -N\Bigg\{
\left(
-1+\frac{B(\pi-2\Phi_0)^2}{24}
\right)
\\
&\quad \times\left[
\frac{(1+\cos\Phi_0)^2}{2}\log(1+\cos\Phi_0)
-\frac{(1-\cos\Phi_0)^2}{2}\log(1-\cos\Phi_0)
-\cos\Phi_0
\right]\\
&
\quad +\frac{\cos\Phi_0}{2}
\left(
1-\frac{B(\pi-2\Phi_0)^2}{8}
\right)
\Big[
(1+\cos\Phi_0)\log(1+\cos\Phi_0)
+(1-\cos\Phi_0)\log(1-\cos\Phi_0)
\Big]
\Bigg\}\\
&
\quad +O(1).
\end{align*}
Recalling that 
\[\cos\Phi_0=1-\frac2A,\] one obtains
\begin{align*}
&-\frac{N(N-1)}2\int_{\Phi_0}^{\pi-\Phi_0} f_N(\theta)d\theta\\
&=-N^2\left(\frac{(A-1)^2}{A^2}\log\left(\frac{2(A-1)}{A} \right)-\frac1{A^2}\log\left(\frac2A \right)-\frac{A-2}{2A}\right)\\
&\quad -N\left\{2\left(-1+\frac{B(\pi-2\Phi_0)^2}{24} \right)\left(\frac{(A-1)^2}{A^2}\log\left(\frac{2(A-1)}{A} \right)-\frac1{A^2}\log\left(\frac2A \right)-\frac{A-2}{2A}\right)\right\}\\
&\quad \quad\left.+\frac{A-2}{A^2}\left(1-\frac{B(\pi-2\Phi_0)^2}{8} \right)\left[(A-1)\log\left(\frac{2(A-1)}A\right)+\log\left(\frac2A \right) \right]\right\}+O(1).
\end{align*}
For the second term, since
\begin{align*}
f_N'(\theta)
&=
\left(1-\frac{(\Delta\psi)^2}{24}\right)
\Bigg[
\cos\theta
\left(
1-\frac N{N-1}\cos\theta
\left(1-\frac{(\Delta\psi)^2}{8}\right)
\right) \\
&\quad \times
\log\left(
1-\frac N{N-1}\cos\theta
\left(1-\frac{(\Delta\psi)^2}{8}\right)
\right) \\
&\quad +
\frac N{N-1}
\left(1-\frac{(\Delta\psi)^2}{8}\right)
\sin^2\theta
\left(
\log\left(
1-\frac N{N-1}\cos\theta
\left(1-\frac{(\Delta\psi)^2}{8}\right)
\right)+1
\right)
\Bigg],
\end{align*}
we have
\begin{align*}
&\frac{N(N-1)}2\frac{(\Delta\psi)^2}{24}
\left(f'_N(\pi-\Phi_0)-f'_N(\Phi_0)\right)\\
&=
\frac{N(N-1)}{48}(\Delta\psi)^2
\left(1-\frac{(\Delta\psi)^2}{24}\right)\\
&\quad \times
\Bigg[
-\cos\Phi_0
\Bigg(
\left(1+\frac N{N-1}\cos\Phi_0
\left(1-\frac{(\Delta\psi)^2}{8}\right)\right)
\log\left(
1+\frac N{N-1}\cos\Phi_0
\left(1-\frac{(\Delta\psi)^2}{8}\right)
\right)
\\
&\quad +
\left(1-\frac N{N-1}\cos\Phi_0
\left(1-\frac{(\Delta\psi)^2}{8}\right)\right)
\log\left(
1-\frac N{N-1}\cos\Phi_0
\left(1-\frac{(\Delta\psi)^2}{8}\right)
\right)
\Bigg)
\\
&\quad
+\frac N{N-1}
\left(1-\frac{(\Delta\psi)^2}{8}\right)
\sin^2\Phi_0
\log\left(
\frac{
1+\frac N{N-1}\cos\Phi_0
\left(1-\frac{(\Delta\psi)^2}{8}\right)}
{
1-\frac N{N-1}\cos\Phi_0
\left(1-\frac{(\Delta\psi)^2}{8}\right)}
\right)
\Bigg].
\end{align*}
Therefore, expanding in decreasing powers of $N$,
\begin{align*}
&\frac{N(N-1)}{2}\frac{(\Delta\psi)^2}{24}
\left(f_N'(\pi-\Phi_0)-f_N'(\Phi_0)\right)\\
&=
B\frac{(\pi-2\Phi_0)^2}{48}
\Bigg[
-\cos\Phi_0
\Big(
(1+\cos\Phi_0)\log(1+\cos\Phi_0)
+(1-\cos\Phi_0)\log(1-\cos\Phi_0)
\Big)
\\
&\quad +\sin^2\Phi_0
\log\left(
\frac{1+\cos\Phi_0}{1-\cos\Phi_0}
\right)
\Bigg]N
+O(1)\\
&=\frac{B(\pi-2\Phi_0)^2}{24A^2}\left[(A^2-4A+2)\log\left(\frac A2 \right)+(A-1)(4-A)\log(A-1) \right]N+O(1).
\end{align*}
Combining the expansion of $T_{trop,1}$ with the two contributions above and simplifying, the result follows.
\end{proof}

We can now prove Theorem \ref{main2}.

\proof[Proof of Theorem \ref{main2}.] 
The decomposition of the expected energy \eqref{exenergy}, by Proposition \ref{polar} and Proposition \ref{tropical}
\begin{align*}
\mathbb E(E_N)&=-(N-1)\log 4+T_{eq}+ T_{pol,1}+T_{pol,2}+T_{trop,1}+T_{trop,2}\\
&=(P_2(A)+T_2(A))N^2+(P_{\log}(A)+T_{\log}(A))N\log N\\
&\quad +(-2\log 2+P_1(A)+T_1(A,B))N+O(\sqrt{N}\log N).
\end{align*}
By Lemma \ref{passo1}, Proposition \ref{polar} and Proposition \ref{tropical}, since
\begin{align*}
P_2(A)+T_2(A)&=\frac12-\log2, \quad \text{ and } \quad P_{\log}(A)+T_{\log}(A)=-\frac12,
\end{align*}
we obtain
\[
\mathbb E(E_{N})=
\left(\frac12-\log2 \right)N^2-\frac12 N\log N+C(A,B)N+O(\sqrt{N}\log N),
\]
where
\begin{align*}
C(A,B)&=-2\log2+P_1(A)+T_1(A,B)\\
&=\frac{1}{3}\log\!\left(
\frac{8A^{7}}
{B^{\frac32}(\pi-2\Phi_0)^{3}(A-1)^{7}}
\right)
+\frac{1}{A}\log\!\left(
\frac{B(\pi-2\Phi_0)^{2}(A-1)}
{16A}
\right)
\\&\quad -\frac{1}{A}
+\frac{B(A-2)(\pi-2\Phi_0)^{2}}
{24A}.
\end{align*}
\endproof

\begin{comment}
\begin{remark} Let us suppose that
there is no tropical part (except at the equator, and that term is $o(N)$). In this case 
Hence
\[\mathbb E(\mathcal E_{\log})
= \left(\frac12-\log 2 \right)N^2-\frac12 N\log N+N\left(\log 2-\frac12+\frac43\log 2-\frac32\log 2 \right)+o(N),
\]
that is exactly the result in \cite[section 4.1]{BE}.
\end{remark}
\end{comment}

\proof[Proof of Theorem \ref{main}]
For every fixed $A$, the value of $B$ which minimizes $C(A,B)$ is
\[B_{\min}(A)=\frac{12}{(\pi-2\Phi_0)^2}.\]
In this case,
\begin{align*}
C(A,B_{\min}(A))
&=
\left(\frac{7}{3}-\frac{1}{A}\right)
\log\left(\frac{A}{A-1}\right)
+\left(\frac{1}{A}-\frac{1}{2}\right)\log 3
-\frac{2\log 2}{A}
+\frac{1}{2}
-\frac{2}{A}.
\end{align*}
Notice that $C(A,B_{\min}(A))$ is decreasing for $A>2$ and if $A\to+\infty$, 
\[C(A,B_{\min}(A))\to \frac{1-\log3}{2}.\]
Therefore,
\[\inf_{A> 2, B>0} C(A,B)=\frac{1-\log3}{2}.\]
Thus, for every $\varepsilon>0$, there exist $A_0> 2$ and $B_0>0$ in $\{(a/b)^2:\ a,b \text{ odd integers}\}$ such that 
\[C(A_0,B_0)\leq \frac{1-\log3}{2}+\varepsilon.\]
Let
\[A_0=\left(\frac{\alpha}{\sigma}\right)^2 \qquad B_0=\left(\frac{\beta}{\tau}\right)^2.\]
Since we need
\[N=\left(\frac{\alpha}{\sigma}\right)^2(2n_P+1)^2=\left(\frac{\beta}{\tau}\right)^2(2n_T+1)^2,\]
that is
\[\alpha\tau(2n_P+1)=\beta\sigma(2n_T+1),\]
it suffices to take 
\[2n_T+1=(2h+1)\alpha\tau, \qquad 2n_P+1=(2h+1)\beta\sigma\]
with $h$ integer. In other words,
\[N=\alpha^2\beta^2(2h+1)^2.\]
\endproof

\begin{remark} 
The total number of parallels in the Pearl ensemble is 
\[2n_P+2n_T+1=\sqrt{N}\left(\frac{1}{\sqrt{A}}+\frac1{\sqrt{B}}\right)-1.\]
Notice that 
\[\lim_{A\to+\infty}\left(\frac{1}{\sqrt{A}}+\frac1{\sqrt{B_{\min}(A)}}\right)=\frac{\pi}{\sqrt{12}}\sim 0.906\ldots.\]
In the heuristics from \cite{RSZ} described in the Introduction, the total number of parallels is 
\[2n+1\sim \frac{\sqrt{\pi N}}2\]
with
\[\frac{\sqrt{\pi}}2\sim 0.886\ldots\]
Notice that
\[\frac{2n_P+2n_T+1}{2n+1}=\sqrt{\frac{\pi}{3}}\sim 1.023\ldots\]
This means that the optimal Pearl ensemble has the $2.3\%$ of parallels more than the construction in \cite{RSZ}. 
\end{remark}

%\sim 

\end{document}